\newtheorem{theorem}{Theorem}
\newtheorem{proposition}{Proposition}
\newtheorem{lemma}{Lemma}
\newtheorem{corollary}{Corollary}
\newenvironment{proof}[1][Proof]{\begin{trivlist}
		\item[\hskip \labelsep {\bfseries #1}]}{\end{trivlist}}
\newenvironment{remark}[1][Remark]{\begin{trivlist}
		\item[\hskip \labelsep {\bfseries #1}]}{\end{trivlist}}
\title{ Palindromic length complexity and a generalization of Thue-Morse sequences}
\author{Shuo LI}
\date {}
\begin{document}
	
\maketitle
\section {Introduction}
The notion of palindromic length of a finite word  as well as an infinite word was first introduced by Frid,  Puzynina and Zamboni\cite{FRID2013737}. They conjectured that if the palindromic length of an infinite word is bounded, then this sequence is eventually periodic. This conjecture is widely studied by \cite{FRID2013737}\cite{FRID2018202}\cite{ambroz}, and the palindromic length of some specific sequences are studied as well: Frid \cite{FRID2018202} showed that Sturmian  words  have an unbounded palindromic  length $PL_u$  and Ambro\v{z} \cite{ambroz} showed that $PL_u$ grows arbitrarily slowly. \cite{AMBROZ2019} studied palindromic lengths of fixed points of a specific class of morphisms and gave upper bounds for the Fibonacci word and the Thue-Morse word. In this article, we give a formal expression of the palindromic  length of Thue-Morse sequence and find all sequences which have the same palindromic  length as Thue-Morse's.  After writing a first version of this paper, we found that some results in the same direction were obtained by Frid \cite{anna} for Thue-Morse sequences. However, we will indicate how our results can be applied for a type of generalization of Thue-Morse sequences.  

\section {Definitions and notation}

Let $(a_n)_{n\in \mathbf{N}}$ be a sequence and  let us define a (finite) word, or a factor, of a sequence to be a (finite) string of the sequence. Let $w_a(x, y)$ denote the factor of the sequence $(a_n)_{n\in \mathbf{N}}$ beginning at the position $a_x$ of length $y$, in other words $w_a(x,y)=\overline{a_xa_{x+1}...a_{x+y-1}}$. \\

Let $\widetilde{w}$ denote the reversal of $w$, that is to say, if $w=\overline{w_0w_1...w_k}$ then $\widetilde{w}=\overline{w_kw_{k-1}...w_0}$, we say a word $w$ is palindromic if $w=\widetilde{w}$. Let us denote by $Pal$ the set of all palindromic words. \\

We define the palindromic length of a word $w$, which will be denoted by $|w|_{pal}$, to be:

$$|w|_{pal}=\min\left\{k|w=p_1p_2...p_k, p_i \in Pal,\  \forall i \in [1,k]\right\},$$
in this case we say $w=p_1p_2...p_k, p_i$ is an optimal palindromic decomposition of $w$.\\

Let us define the palindromic length sequence $(pl_a(n))_{n\in \mathbf{N}}$ of the sequence $(a_n)_{n\in \mathbf{N}}$ to be 

$$pl_a(n)=|w_a(0,n)|_{pal},$$
in other words, $pl_a(n)$ is the palindromic length of the word $\overline{a_0a_1...a_{n-1}}.$\\

Now let us define a class of infinite sequences $\mathcal{C}$ which can be considered as an generalization of the Thue-Morse sequence:\\
Let $\sum$ be an alphabet which contains at least two letters and let $a \in \sum$.\\
Let $F$ be the set of bijections over $\sum$. \\
Let $(f_n)_{n \in \mathbf{N}}$ be a sequence over $F$ and $(w_n)_{n \in \mathbf{N}}$ be a sequence of finite words over $\sum$ which are defined recursively as:
$$ f_i \in F \;\text{such that}\; f_i(w_i)\neq w_i  \forall n\geq0,$$
and
$$\begin{cases}
    w_0=a \\
    w_n=w_{n-1}f_{n-1}(w_{n-1})f_{n-1}(w_{n-1})w_{n-1}, \forall n>0.
  \end{cases}$$
Let $f(a)$ be the limit of the sequence $(w_n)_{n \in \mathbf{N}}$ which exists because of the definition.\\
The class $\mathcal{C}$ is the set of all infinite limits defined as above. It is easy to see that, if the size of $\sum$ is equal to $2$, say $\sum=\left\{a,b\right\}$, then all sequences in $\mathcal{C}$ are Thue-Morse sequences, they may be written as
$$a,b,b,a,b,a,a,b, b,a,a,b,a,b,b,a...$$ and the one by changing $a$ to $b$ and $b$ to $a$.
Let $(pl(n))_{n\in \mathbf{N}}$ be the palindromic length sequence of Thue-Morse, the first elements of this sequence are

$$1,2,2,1,2,3,3,2,3,4,3,2,3,3,2,1...$$

\section{Palindromic length of sequences in $\mathcal{C}$} 

In this section we will study palindromic lengths of sequences in $\mathcal{C}$ and prove that they all have the same palindromic length, as the one of Thue-Morse. \\

Let $(a_n)_{n\in \mathbf{N}}$ be a sequence in $\mathcal{C}$, we will begin with some properties of palindromic factors of this sequence. 

\begin{lemma}
For any integer $x$, $w_a(4x, 4)$ is of type $abba$ such that $a \neq b$.\\
As a corollary, $a(2n+1)\neq a(2n)$. 
\end{lemma}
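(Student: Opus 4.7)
The plan is to prove the desired property first for each finite prefix $w_n$ by induction on $n$, and then pass to the infinite limit $f(a)$. The base case $n=1$ is a direct computation: $w_1=w_0 f_0(w_0) f_0(w_0) w_0=\overline{a\, f_0(a)\, f_0(a)\, a}$, and the defining requirement $f_0(w_0)\neq w_0$ forces $f_0(a)\neq a$, so $w_1$ already has the shape $\alpha\beta\beta\alpha$ with $\alpha\neq\beta$.

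For the inductive step, I would assume the claim for $w_{n-1}$ and write $w_n = u_1 u_2 u_3 u_4$ with $u_1=u_4=w_{n-1}$ and $u_2=u_3=f_{n-1}(w_{n-1})$. Since $|w_{n-1}|=4^{n-1}$ is divisible by $4$ as soon as $n\geq 2$, the boundaries between consecutive $u_i$'s sit at positions that are multiples of $4$. Consequently any length-$4$ factor of $w_n$ starting at an index $4x$ is contained in a single $u_i$. For $u_1$ and $u_4$ the claim follows directly from the inductive hypothesis. For $u_2$ and $u_3$, one uses that $f_{n-1}$ is a letterwise bijection of $\sum$: it sends any block $\alpha\beta\beta\alpha$ to $f_{n-1}(\alpha)f_{n-1}(\beta)f_{n-1}(\beta)f_{n-1}(\alpha)$, and injectivity keeps the two letters distinct.

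Since every finite index is eventually contained in some $w_n$ and $f(a)$ is the coordinate-wise limit of the $w_n$'s, the property transfers to every multiple-of-$4$ starting position of the infinite sequence. The corollary is then immediate: for any $n$, the pair $(a_{2n},a_{2n+1})$ appears either as the first two or as the last two letters of some block $w_a(4x,4)=\alpha\beta\beta\alpha$, so in either case $a_{2n}\neq a_{2n+1}$.

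The argument is essentially bookkeeping; the only delicate point is the alignment of the recursive decomposition of $w_n$ with the $4\mathbf{Z}$-indexed blocks demanded by the statement, which is precisely what the observation $4 \mid |w_{n-1}|$ for $n\geq 2$ is there to handle.
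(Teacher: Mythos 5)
Your argument is correct and is exactly the unpacking of what the paper dismisses as ``trivial because of the definition'': an induction on the recursive construction $w_n=w_{n-1}f_{n-1}(w_{n-1})f_{n-1}(w_{n-1})w_{n-1}$, using $4\mid|w_{n-1}|$ to align the length-$4$ blocks with the four pieces and the bijectivity of $f_{n-1}$ to preserve the $\alpha\beta\beta\alpha$ shape with $\alpha\neq\beta$. The passage to the limit and the derivation of $a_{2n}\neq a_{2n+1}$ are both handled correctly, so there is nothing to add.
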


\begin{proof}
This lemma is trivial because of the definition. 
\end{proof}

\begin{lemma}
Let $w_a(x, y)$ be a palindromic factor of Thue-Morse sequence such that $y$ is odd, then $y$ is either $1$ or $3$.
\end{lemma}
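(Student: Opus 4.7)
My plan is to exploit the standard self-similarity of the Thue-Morse sequence: as the fixed point of $a\mapsto ab$, $b\mapsto ba$ (implicit in the two-letter instance of the recurrence $w_n=w_{n-1}f_{n-1}(w_{n-1})f_{n-1}(w_{n-1})w_{n-1}$, where every $f_{n-1}$ must equal the letter-swap $f$), the sequence $(a_n)$ satisfies $a_{2n}=a_n$ and $a_{2n+1}=f(a_n)$, with $f$ a fixed-point-free involution on the alphabet.

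Fix a palindromic factor $w_a(x,y)$ with $y=2k+1$ odd, so $a_{x+i}=a_{x+2k-i}$ for $0\le i\le 2k$, and split on the parity of $x$. If $x=2x'$ is even, taking this identity at the even indices $i=2j$ (and using $a_{2n}=a_n$) yields that $a_{x'}a_{x'+1}\cdots a_{x'+k}$ is a palindrome of length $k+1$; taking it at the odd indices $i=2j+1$ (and using $a_{2n+1}=f(a_n)$ together with the bijectivity of $f$) yields that its length-$k$ prefix is also a palindrome. If $x=2x'+1$ is odd, the same computation, shifted by one, gives instead that $a_{x'}\cdots a_{x'+k}$ is a palindrome and that its length-$k$ \emph{suffix} is also a palindrome.

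In either case, a word $u=u_0u_1\cdots u_k$ is a palindrome and so is one of its length-$k$ ends; combining $u_i=u_{k-i}$ with the prefix (resp.\ suffix) palindrome identity immediately yields $u_i=u_{i-1}$ for all $1\le i\le k$, so $u$ is constant. But the Thue-Morse sequence is cube-free, so it contains no constant factor of length $3$; hence $k+1\le 2$ and $y=2k+1\in\{1,3\}$. The only delicate point is the parity case split and the index bookkeeping; after the palindromic identity is unfolded by parity of the running index and the relations $a_{2n}=a_n$, $a_{2n+1}=f(a_n)$ are applied, everything reduces to a one-line appeal to cube-freeness of Thue-Morse.
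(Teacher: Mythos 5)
Your proof is correct, but it takes a genuinely different route from the paper's. The paper argues on the block structure: an odd palindrome of length at least $5$ would contain a central palindromic factor of length $5$, and any factor of length $5$ sits inside one of the two possible length-$8$ windows $xyyxxyyx$ or $xyyxf(x)f(y)f(y)f(x)$ formed by consecutive blocks $w_a(4z,4)$, neither of which contains a palindrome of that length --- a short finite check. You instead desubstitute: using the fixed-point relations $a_{2n}=a_n$, $a_{2n+1}=f(a_n)$ (valid here because on a two-letter alphabet the condition $f_i(w_i)\neq w_i$ forces every $f_i$ to be the swap, so the paper's construction reproduces the standard Thue--Morse fixed point), you split the palindromic identity $a_{x+i}=a_{x+2k-i}$ by the parity of $i$ and obtain, one level down, a word of length $k+1$ that is a palindrome simultaneously with one of its length-$k$ ends, hence constant; cube-freeness of Thue--Morse then forces $k\le 1$. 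I checked the index bookkeeping in both parity cases and it is right. Your argument is arguably cleaner than the paper's rather terse finite check and gives a slightly stronger structural fact (odd palindromes project to constant words), at the cost of invoking two external ingredients: the identification with the morphic fixed point and Thue's cube-freeness theorem. The trade-off is generality: the paper's block argument extends essentially verbatim to the whole class $\mathcal{C}$ over larger alphabets with varying bijections $f_n$ (compare Lemma 8), whereas your relation $a_{2n+1}=f(a_n)$ with a single $f$ is specific to the binary case; for the lemma as stated (Thue--Morse proper) this is immaterial.
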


\begin{proof}
If $w_a(x, y)$ is of size larger than $3$, then it contains at least one palindromic word in the center of size $5$, however a word of size $5$ should be inside of a word of type $xyyxxyyx$ or $xyyxf(x)f(y)f(y)f(x)$,where $f$ is a bijection over the alphabet defined as above, but none of them contains a palindromic word of such size. 
\end{proof}

\begin{lemma}
Let $w_a(x, y)$ be a palindromic word of $(a_n)_{n\in \mathbf{N}}$ such that $y$ is even, then either there exist $z, r\in \mathbf{N}$ such that $w_a(x, y)$ is embedded into the center of palindromic word $w_a(4z, 4r)$ or $x \equiv 3 \mod 4$ and $y \equiv 2 \mod 4$.
\end{lemma}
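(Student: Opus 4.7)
The plan is to do a case analysis on the pair $(x \bmod 4,\; y \bmod 4)$. For concision set $\alpha_k = a_{4k}$ and $\beta_k = a_{4k+1}$, so that by Lemma 1 the $k$-th four-letter block reads $\alpha_k \beta_k \beta_k \alpha_k$ with $\alpha_k \neq \beta_k$. The first step is a central-character argument: since $w_a(x,y)$ has even length and is a palindrome, the two middle letters $a_{x+y/2-1}$ and $a_{x+y/2}$ must coincide. Inspecting the four residues of $x + y/2 - 1$ modulo $4$, the pairs $(\alpha_k, \beta_k)$ and $(\beta_k, \alpha_k)$ (at residues $0$ and $2$) are forbidden by Lemma 1, while $(\beta_k, \beta_k)$ (residue $1$) always matches and the last-of-block/first-of-next-block pair (residue $3$) matches only when $\alpha_k = \alpha_{k+1}$. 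This forces $x + y/2 \equiv 0$ or $2 \pmod 4$; combined with $y \equiv 0$ or $2 \pmod 4$, this leaves only the four cases $(x \bmod 4,\; y \bmod 4) \in \{(0,0),(1,2),(2,0),(3,2)\}$, and the remaining four residue classes admit no even palindrome at all.

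For the three non-exceptional surviving cases, the plan is to extend the palindrome symmetrically on both sides until it becomes block-aligned of the form $w_a(4z, 4r')$. Case $(0,0)$ is trivial, taking $(z,r') = (x/4,\; y/4)$. For case $(2,0)$, write $x = 4z+2$; applying the palindrome equality at the two outermost positions of $w_a(x,y)$ yields $\beta_z = \beta_{z+r}$ and $\alpha_z = \alpha_{z+r}$, which are precisely the conditions needed for $w_a(4z,\,4r+4)$ to be a palindrome centrally containing $w_a(x,y)$. For case $(1,2)$, write $x = 4z+1$; one inner palindrome condition of $w_a(x,y)$ forces $\alpha_z = \alpha_{z+r}$, and this suffices to match the new boundary characters $a_{4z}=\alpha_z$ and $a_{4z+4r+3}=\alpha_{z+r}$ so that $w_a(4z,\,4r+4)$ is palindromic. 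The remaining case $(3,2)$ is the second disjunct of the lemma and needs no further work.

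The main routine obstacle is bookkeeping: translating each $a_{4k+j}$ into the correct $\alpha_\cdot$ or $\beta_\cdot$ entry and checking that the palindrome conditions inside $w_a(x,y)$ deliver exactly the equalities required for the proposed block-aligned extension. The conceptual point, explaining why $(3,2)$ is kept as a separate case, is that the analogous attempt there would extend by three characters on each side to $w_a(4z,\,4r+8)$; the palindrome condition of $w_a(x,y)$ only supplies $\alpha_z = \alpha_{z+r+1}$ and not $\beta_z = \beta_{z+r+1}$, so over an alphabet with more than two letters the extension can genuinely fail, forcing the lemma to list $(3,2)$ as an exception rather than embed it as well.
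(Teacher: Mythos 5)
Your proof is correct and takes essentially the same route as the paper: the two-middle-letters parity argument to cut the residues of $(x,y)$ down to $(0,0),(1,2),(2,0),(3,2)$, followed by a symmetric extension to a block-aligned palindrome, where the paper phrases your boundary-letter check as the two end blocks being determined by the overlapping prefix/suffix of $w_a(x,y)$ of length $2$ or $3$. Your closing observation on why $(3,2)$ must stay an exception (only $\alpha_z=\alpha_{z+r+1}$ is forced, not $\beta_z=\beta_{z+r+1}$, over a non-binary alphabet) is exactly the reason the paper excludes that case.
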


\begin{proof}
We first prove that $x+y/2-1$ is odd, otherwise $x+y/2-1=2t$ and $x+y/2=2t+1$ for some $t$, so that $a_{2t}=a_{2t+1}$ contradicts to Lemma 1. This fact implies that 
$$\begin{cases}
    \text {if}\; x \equiv 0 \mod 4\; \text{then}\; x+y-1 \equiv 3 \mod 4 \\
    \text {if}\; x \equiv 1 \mod 4\; \text{then}\; x+y-1 \equiv 2 \mod 4 \\
    \text {if}\; x \equiv 2 \mod 4\; \text{then}\; x+y-1 \equiv 1 \mod 4 \\
    \text {if}\; x \equiv 3 \mod 4\; \text{then}\; x+y-1 \equiv 0 \mod 4.\\
  \end{cases}.$$
For the last case, we have $x \equiv 3 \mod 4$ and $y \equiv 2 \mod 4$. Now let us check that, for other cases, the word $w_a(x, y)$ can be embedded into the center of a palindromic word of type $w_a(4z, 4r)$. Let $w_a(4z, 4r)$ be the shortest factor of above type including $w_a(x, y)$, here we prove that this factor is palindromic. It is easy to see that $w_a(x, y)$ is at the center of $w_a(4z, 4r)$  and the word $w_a(4(z+1), 4(r-1))$ is palindromic because of the palindromicity of $w_a(x, y)$; furthermore we have the fact that $w_a(4z, 4)=\widetilde{w_a}(4(z+r-1), 4)$ when $x\not \equiv 3 \mod 4$, because these two words of length $4$ are both palindromic and uniquely defined by respectively a prefix or a suffix of $w_a(x, y)$ of size smaller than $4$ but larger than $1$. In conclusion, the word $w_a(4z, 4r)$ is palindromic.
\end{proof}

\begin{lemma}
Let $w_a(0, s)$ be a prefix of $(a_n)_{n\in \mathbf{N}}$ in $\mathcal{C}$, and let $w_a(0, s)=p_1p_2...p_r$ be an optimal palindromic decomposition such that for all $i: 1 \leq i \leq r$, $p_i$ is either singleton or can be embedded into the center of palindromic word of type $w_a(4z, 4t)$, then there exists at least one optimal palindromic decomposition of $w_a(0,s)$ of following forms:
$$\begin{cases}
w_a(0, s)=q_1q_2...q_r; s \equiv 0 \mod 4;\\
w_a(0, s)=q_1q_2...q_{r-1}t_1; s \equiv 1 \mod 4;\\
w_a(0, s)=q_1q_2...q_{r-2}t_1t_2; s \equiv 2 \mod 4;\\
w_a(0, s)=q_1q_2...q_{r-2}t_1l_1; s \equiv 3 \mod 4;\\
w_a(0, s)=q_1q_2...q_{r-3}t_1t_2l_1; s \equiv 2 \mod 4;
\end{cases}$$
where $q_i$ are palindromes of length $4k_i$, $t_i$ are singletons and $l_i$ are palindromes of length $2p_i$.

\end{lemma}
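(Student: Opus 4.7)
The plan is to take the given optimal decomposition $p_1\cdots p_r$ and adjust its cut positions, without increasing the piece count, so as to realize one of the five listed tails.

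First I do a modular analysis. By Lemma~2, an odd-length palindromic factor has length $1$ or $3$; since $w_a(4z,4t)$ has even length, a length-$3$ palindrome cannot sit at its center. Hence every non-singleton $p_i$ has even length. Writing $|p_i|=2k$ and supposing $p_i$ is centered in $w_a(4z,4t)$, its start position is $4z+2t-k$, so modulo $4$ a length-$4k$ piece starts at residue $0$ or $2$, while a length-$(4k+2)$ piece starts at residue $1$ or $3$. Combined with singletons, this determines the admissible transitions between consecutive cut residues: from $0$, go to $0$ (via a length-$4k$ piece, a candidate $q_i$) or $1$ (singleton); from $1$, to $2$ (singleton) or $3$ (length-$(4k+2)$); from $2$, to $3$ (singleton) or $2$ (length-$4k$); from $3$, to $0$ (singleton) or $1$ (length-$(4k+2)$).

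The central step is to show that in an optimal decomposition no internal excursion away from residue $0$ can occur: a minimal such excursion is either $0\to1\to2\to3\to0$ (four singletons, covering a $w_a(4z,4)$ which is palindromic by Lemma~1) or $0\to1\to3\to0$ (singleton, length-$(4k+2)$ palindrome, singleton). For the latter, the covered segment is exactly $w_a(4z,4(k+1))$; this is palindromic because by hypothesis the middle piece sits in \emph{some} palindromic $w_a(4z',4t')$, and a short induction shows the smallest such $w_a$-factor containing it centered (which is precisely $w_a(4z,4(k+1))$) inherits palindromicity from any larger one. In either case, the excursion merges into a single palindrome, contradicting optimality. A non-minimal excursion is reduced to a minimal one by peeling off sub-configurations and reapplying the argument.

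With no internal excursion, every cut except possibly in the final tail of length at most $3$ lies at a multiple of $4$. A case check on $s\bmod 4$ using the admissible transitions yields the five listed tails: $s\equiv 0$ gives form A, $s\equiv 1$ form B, $s\equiv 3$ form D (a singleton then a length-$(4k+2)$ palindrome $l_1$), and $s\equiv 2$ either form C (two trailing singletons) or form E (two singletons then a length-$4k$ palindrome $l_1$ starting at residue $2$). The main obstacle will be handling non-minimal internal excursions, where the walk oscillates among residues $1, 2, 3$ before returning to $0$; I expect this to require an induction on the number of non-residue-$0$ cuts, peeling off the right sub-configuration at each step using the palindromic structure of the surrounding $w_a(4z,4t)$ guaranteed by the hypothesis.
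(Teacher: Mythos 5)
Your modular set-up is sound and matches the facts the paper relies on: non-singleton pieces are even by Lemma~2, their start residues are forced by the parity condition of Lemma~3, and your two ``minimal excursion'' merges ($0\to1\to2\to3\to0$ into the palindrome $w_a(4z,4)$, and $0\to1\to3\to0$ into the palindromic $w_a(4z,4(k+1))$) are correct. The genuine gap is exactly where you flag it: the claim that ``a non-minimal excursion is reduced to a minimal one by peeling off sub-configurations'' does not work as stated, because a non-minimal internal excursion --- say $0\to1\to3\to1\to3\to0$, i.e.\ singleton, $P_1$, $P_2$, $P_3$, singleton with $|P_i|\equiv 2 \bmod 4$ --- contains \emph{no} minimal excursion as a contiguous block (its interior never revisits residue $0$), and its full span need not be a single palindrome, so neither of your two mechanisms applies to it directly. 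The same problem recurs in your tail analysis: with no internal excursions the decomposition can still end with a long final excursion such as $t_1 l_1 l_2$ ending at residue $1$, which is a legitimate optimal tail but is not of form B until it is rewritten, so ``a case check on $s\bmod 4$'' does not by itself yield the five forms.

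The missing ingredient is the conjugation identity that the paper's proof is built on: if $q$ is an even palindrome centered in a palindromic $w_a(4z,4t)$ and $r$ is the short word immediately to its left, then $rq\widetilde r$ is palindromic of length a multiple of $4$, and the next piece factors as $\widetilde r q' r$ with $q'$ palindromic (or empty); iterating this pushes each short piece rightward through a whole chain of even palindromes \emph{without changing the piece count}. This single tool both normalizes long tails (turning $t_1 l_1 l_2$ into $q_1' q_2' t_1$) and kills internal excursions (the pushed singleton eventually lands adjacent to the next one inside a block $xyyx$, where two or three of them fuse into a shorter decomposition, contradicting optimality --- this is also how one sees that at most two singletons survive). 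If you state and prove this push lemma, your residue-walk framework goes through; without it, the central step of your argument is an unproven assertion.
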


\begin{proof}

Let us consider a factor of $(a_n)_{n\in \mathbf{N}}$ of type $rq_1q_2..q_{2l}$ where $1 \leq |r| \leq 2$ beginning at some position $4x$ where $q_i$ are palindromic words of even size and can be embedded into the center of palindromic word of type $w_a(4z, 4r)$. Here we prove that there exists an other palindromic decomposition of same length such that 
$$rq_1q_2..q_{2l}=q'_1q'_2..q'_{2l}r,$$
where all $q'_i$ are of size $4k_i$. \\

As $q_1$ is palindromic, because of Lemma 3, $rq_1\widetilde{r}$ is also palindromic, let us denote such word by $q'_1$, its size is multiple of $4$. By excluding the case that $|r|=|q_2|=2$, $q_2$ can be written as $\widetilde{r}q'_2r$, where $q'_2$ is either a palindromic word of size $4m$ or empty, so we have the equality $rq_1q_2=q'_1q'_2r$ and the last $r$ begins at some position $4x$. We do it recursively and we end up with the expression $rq_1q_2..q_{2l}=q'_1q'_2..q'_{2l}r$.

In such a way we can accumulate the singletons in the decomposition $w_a(0, s)=p_1p_2...p_r$ and push them to the end. An easy  observation is that there are at most two singletons in an optimal decomposition, since once there are three singletons , they will meet each other by the above algorithm in a block $w_a(4k,4l)$ hence two of them will create a palindromic word of length $2$ which contradicts the optimality. The above process ends up with five possibilities:
$$\begin{cases}
w_a(0, s)=q_1q_2...q_r;\\
w_a(0, s)=q_1q_2...q_{r-1}t_1; \\
w_a(0, s)=q_1q_2...q_{r-2}t_1t_2; \\
w_a(0, s)=q_1q_2...q_{r-2}t_1l_1; \\
w_a(0, s)=q_1q_2...q_{r-3}t_1t_2l_1;
\end{cases}$$
where $q_i$ are palindromes whose length are multiple of $4$, $t_i$ are singletons and $l_i$ are palindromes whose length are multiple of $2$.

The first case leads to $s \equiv 0 \mod 4$; second one leads to $s \equiv 1 \mod 4$ and the third one leads to $s \equiv 2 \mod 4$; for the fourth one we can check that $|l_1| \not \equiv 0 \mod 4$ because of Lemma 3, so that $s \equiv 3 \mod 4$; the fifth case, $|l_1|$ must be a multiple of $4$, so $s \equiv 2 \mod 4$.

\end{proof}

\begin{corollary}
Let  $(pl(n))_{n\in \mathbf{N}}$ be the palindromic length of a sequence in $\mathcal{C}$ such that all its prefixes admit an optimal palindromic decomposition satisfying the constrains listed as in the previous lemma,  then for all $k\geq 0$:\\
$pl(4k+i)\geq pl(4k+3) +1$ for $i=1,2$ and $pl(4k)\geq pl(4k+3) $.
\end{corollary}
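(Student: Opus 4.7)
Let $R := pl(4k+3)$ and, by case~4 of Lemma~4, fix an optimal decomposition
\[
w_a(0,4k+3) = q_1 \cdots q_{R-2}\, t_1\, l_1,
\]
in which each $|q_j|$ is a multiple of $4$, $t_1$ is a singleton, and $|l_1| \equiv 2 \mod 4$. By Lemma~1, the block $w_a(4k,4)$ has the shape $cddc$ with $c \neq d$; in particular, the tail added when passing from $w_a(0,4k)$ to $w_a(0,4k+3)$ is $cdd$, and the tail added from $w_a(0,4k+i)$ is a suffix of $cddc$. I will prove each of the three inequalities by contradiction: assume a sufficiently short decomposition of the smaller prefix, append the missing letters to it, and then exhibit a compression that brings the total piece count strictly below $R$, contradicting the optimality of $R$.

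For $pl(4k) \geq R$, suppose $w_a(0,4k) = p_1 \cdots p_{R-1}$ is a decomposition of length $R-1$. By case~1 of Lemma~4 we may choose it so that every $|p_j| \equiv 0 \mod 4$. Appending $cdd$ naively gives $R+1$ pieces, so the needed compression must absorb two pieces at the right end; the plan is to show, using the $abba$-alignment between $w_a(4k-4,4)$ and $w_a(4k,4)$ dictated by Lemma~1, that $p_{R-1}\cdot c$ is the interior of a longer palindrome of length $|p_{R-1}|+1$, and that appending $dd$ extends it further into a palindrome of length $|p_{R-1}|+3$, so that the final three letters dissolve into the last piece and the decomposition has length $R-1 < R$. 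For $pl(4k+i) \geq R+1$ with $i \in \{1,2\}$, suppose a decomposition of $w_a(0,4k+i)$ has length $R$; case~$(i+1)$ of Lemma~4 places $i$ trailing singletons at its end. Appending the missing $3-i$ letters of $cddc$ and merging them with the trailing singletons must produce, by the same alignment argument, a palindrome of length $\equiv 2 \mod 4$, namely the $l_1$ component of the case~4 form for $w_a(0,4k+3)$. This single merge drops the piece count from a naive $R+1$ to $R-1$, again contradicting $R = pl(4k+3)$.

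\textbf{Main obstacle.} The heart of the proof is justifying each tail-merge. Lemma~3 splits even palindromic factors into Type~I (embeddable in a palindromic block $w_a(4z,4t)$) and Type~II (position $\equiv 3$ and length $\equiv 2 \mod 4$); I plan to show that the Type~I palindromes in the assumed shorter decomposition extend naturally across the boundary at $4k$ to produce the palindromes demanded by the compressed form, while the only obstruction to such extension is exactly the Type~II configuration that reappears as $l_1$ in the case~4 decomposition of $w_a(0,4k+3)$. The main technical points are the small-$R$ degenerate cases, where some $q$-components are empty and the preceding $4$-block may not exist, and propagating the alignments consistently across many $4$-blocks; these are most cleanly handled by an induction on $k$, with the base cases checked by direct inspection using the recursive construction of sequences in $\mathcal{C}$.
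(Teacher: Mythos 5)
There is a genuine mismatch at the very first step: your decomposition is anchored to the wrong prefix. Although Section 2 literally defines $pl_a(n)=|w_a(0,n)|_{pal}$, the paper's operative convention --- forced by the displayed initial values $1,2,2,1,\dots$ and by the way this corollary and Lemma 7 are actually used --- is that $pl(m)$ is the palindromic length of the prefix of length $m+1$. Hence $pl(4k+3)$ is the palindromic length of $w_a(0,4k+4)$, whose Lemma 4 normal form is $q_1q_2\cdots q_r$ with every $|q_j|$ a multiple of $4$ (case 1), not the case-4 form $q_1\cdots q_{R-2}t_1l_1$ you fix at the outset; likewise $pl(4k)$ refers to $w_a(0,4k+1)$, not to $w_a(0,4k)$. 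Under your literal reading the statement is simply false: for $k=0$ it would assert $|w_a(0,1)|_{pal}\geq |w_a(0,3)|_{pal}+1$, i.e.\ $1\geq 3$. So the plan cannot be repaired by local edits; every prefix in it is off by one.

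Even after re-anchoring, the key merge in your $pl(4k)\geq pl(4k+3)$ branch cannot happen: you need $p_{R-1}cdd$, a factor of odd length $|p_{R-1}|+3>3$, to become a single palindrome, but Lemma 2 says odd palindromic factors of these sequences have length at most $3$. The paper's argument runs in the opposite, direct, direction and needs no contradiction, no compression of the large pieces, and no induction on $k$: starting from the Lemma 4 normal form of the \emph{shorter} prefix, whose tail is $t_1$, $t_1t_2$, $t_1t_2l_1$ or $t_1l_1$ sitting inside the block $w_a(4k,4)=t_1t_2t_2t_1$ given by Lemma 1, one completes that tail to the single palindrome $t_1t_2t_2t_1$, $t_1t_2l_1t_2t_1$ or $t_1l_1t_1$ (these symmetric extensions are genuine factors precisely because of the central-embedding hypothesis on $l_1$). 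This yields a decomposition of $w_a(0,4k+4)$ with at most $pl(4k)$ pieces in the case $i=0$ and at most $pl(4k+i)-1$ pieces for $i=1,2$, which is the whole proof. Your sketch gestures at the right ingredients (the $abba$ blocks of Lemma 1 and the dichotomy of Lemma 3) but defers the actual merges to an induction that is not needed and, in the $pl(4k)$ case, to a merge that Lemma 2 forbids.
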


\begin{proof}
For $i=0$, $w_a(0, 4k+1)$ is of the form $q_1q_2...q_rt_1$.  Using Lemma 1 we have $w_a(0, 4k+4)=q_1q_2...q_rq$ is a palindromic decomposition, not necessarily optimal, with $q=w_a(4k,4)$, so $pl(4k+3) \leq r+1=pl(4k)$

For $i=1$, there are $2$ cases: if $w_a(0, 4k+2)$ is of the form $q_1q_2...q_rt_1t_2$,  then once more using Lemma 1 we have $w_a(0, 4k+4)=q_1q_2...q_rq$ is a palindromic decomposition, with $q=w_a(4k,4)=t_1t_2t_2t_1$; if $w_a(0, 4k+2)$ is of the form $q_1q_2...q_rt_1t_2l_1$, using the hypothesis we have $w_a(0, 4k+4)=q_1q_2...q_rq$ is a palindromic decomposition, with $q=t_1t_2l_1t_2t_1$.

For $i=2$, $w_a(0, 4k+3)=q_1q_2...q_rt_1l_1$, using the hypothesis we have $w_a(0, 4k+4)=q_1q_2...q_rq$ is a palindromic decomposition, with $q=t_1l_1t_1$.

So all inequalities as above are proved.

\end{proof}

\begin{lemma}
Let $w_a(0, k)$ be a prefix of $(a_n)_{n\in \mathbf{N}}$, then there is an optimal decomposition $w_a(0, k)=p_1p_2...p_s$ such that none of these palindromes is of length $3$, furthermore, if $p_i$ is of even size then it can be embedded into the center of palindromic word of type $w_a(4z, 4r)$.
\end{lemma}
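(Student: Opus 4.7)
The plan is to proceed by strong induction on $k$, with the small base cases (say $k\leq 7$) dispatched by direct enumeration. For the inductive step, take any optimal palindromic decomposition $w_a(0,k)=p_1\cdots p_s$ and call a palindrome \emph{good} if it is either a singleton or an even palindrome embeddable into the centre of some $w_a(4z,4r)$; by Lemmas~2 and~3 the remaining \emph{bad} palindromes are exactly the length-$3$ ones and the even ones of length $\equiv 2\pmod 4$ starting at a position $\equiv 3\pmod 4$.

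If $p_s$ is good then optimality of $p_1\cdots p_s$ forces $p_1\cdots p_{s-1}$ to be an optimal decomposition of the shorter prefix $w_a(0,k-|p_s|)$; the induction hypothesis supplies a good optimal decomposition of that prefix, and concatenating with $p_s$ finishes the step. The substance of the proof is therefore the case where $p_s$ is bad, where the plan is to exhibit a competing optimal decomposition of $w_a(0,k)$ whose last palindrome is good, reducing to the previous case. If $p_s$ is a bad even palindrome of length $4l+2$ at position $4m-1$, I would start from the observations that its interior $w_a(4m,4l)$ is itself a good palindrome (it is centred in itself) and that $a_{4m-1}=a_{4(m+l)}$, and use the $4$-block structure of Lemma~1 at the boundaries $4m$ and $4(m+l)$ to rewrite a window $p_{s-1}p_s$ (or $p_{s-2}p_{s-1}p_s$ if necessary) as a decomposition whose final palindrome is a singleton, without changing the total count. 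For the length-$3$ subcase, $p_s=\alpha\beta\alpha$ straddles the block boundary at some position $4m$, and the analogous manoeuvre uses the length-$4$ palindrome $w_a(4(m-1),4)$ or $w_a(4m,4)$ from Lemma~1 to swap letters across the window so that the last piece becomes a good singleton.

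The main obstacle I anticipate lies in confirming, in each surgery, that the replacement preserves the total count of palindromes; the naive approach introduces surplus singletons at the block boundaries, and showing they are always absorbable reduces to a finite case analysis based on $k \bmod 4$ and on the length and starting position of $p_{s-1}$. The induction hypothesis applied to the prefix $w_a(0,k-|p_s|)$ restricts $p_{s-1}$ to be of a controlled form, after which the block-boundary identities of Lemma~1 (together with the constraints that Lemmas~2 and~3 impose on which palindromic factors can occur near each boundary) should suffice to carry out each absorption and close the induction.
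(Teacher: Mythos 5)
There is a genuine gap, and it sits exactly where you flag it: the ``absorption'' step in the bad case is the entire content of the lemma, and your proposal leaves it as an unexecuted finite case analysis whose feasibility is doubtful in the form you describe. Splitting a length-$3$ palindrome $\alpha\beta\alpha$ costs two extra pieces (since $\alpha\beta$ and $\beta\alpha$ are not palindromes when $\alpha\neq\beta$, which Lemma~1 forces near a block boundary), and splitting a bad even palindrome $a_{4m-1}\,w_a(4m,4l)\,a_{4(m+l)}$ into its interior plus two singletons likewise costs two; there is no reason a rewriting confined to a window $p_{s-1}p_s$ or $p_{s-2}p_{s-1}p_s$ can recover those two pieces, and you give no candidate identity from Lemma~1 that would do it. In other words, the mechanism you propose --- a local, count-preserving surgery on the tail of the given optimal decomposition --- is precisely what needs to be proved, and asserting that it ``should suffice'' does not close the induction.

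The paper's proof of this lemma does not attempt any local rearrangement. It takes a minimal bad prefix $w_a(0,k)$, assumes that \emph{every} optimal decomposition of it ends in a bad factor, and derives a numerical contradiction: the bad tail pins down $pl$ at index $k-1$ in terms of $pl$ at a smaller index (e.g.\ $pl(4t+1)=1+pl(4t-2)$ in the length-$3$ case), while an explicit competing decomposition (e.g.\ $w_a(0,4t)\,a_{4t}\,a_{4t+1}$), which must be \emph{strictly} suboptimal under the assumption, gives $pl(4t+1)<pl(4t-1)+2$; these two facts contradict the inequality $pl(4t-2)\geq pl(4t-1)+1$ of Corollary~1, which is available because all shorter prefixes are already good by minimality and hence Lemma~4 applies to them. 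Your sketch never invokes Lemma~4 or Corollary~1, and without that global information about how $pl$ varies across a $4$-block the contradiction (or any absorption argument) cannot be reached. To repair your approach you would need to replace the local surgery by exactly this counting argument, i.e.\ show that a bad tail forces a strictly shorter decomposition to exist, rather than trying to transform the bad decomposition in place.
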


\begin{proof}
Let us suppose that $k$ is the smallest number such that $w_a(0, k)$ does not satisfy one of the two constrains above, then either the last palindromic factor in all optimal compositions is of length $3$, or it can not be embedded into the center of palindromic word of type $w_a(4z, 4r)$. If it is in the first case, then the last factor can be found either at position $\overline{a_{4t-1}a_{4t}a_{4t+1}}$ or $\overline{a_{4t-2}a_{4t-1}a_{4t}}$. If $k=4t+2$, then optimal decompositions of $w_a(0,k)$ are of the form $w_a(0, 4t+2)=w_a(0, 4t-1)p$, so that the palindromic length is $pl(4t+1)=1+pl(4t-2)$, otherwise, if we decompose the word as $w_a(0, 4t+2)=w_a(0, 4t)a_{4t}a_{4t+1}$, we have a length $pl(4t-1)+2$, so that
$$pl(4t+1)=1+pl(4t-2) < pl(4t-1)+2;$$
similarly for the case that $k=4t+1$, by considering the decomposition $w_a(0, 4t+1)=w_a(0, 4t)a_{4t}$, we have
$$pl(4k)=1+pl(4k-3) < pl(4k-1)+1,$$ 
both inequalities contradict the previous corollary.

If the last factor can not be embedded into the center of a palindromic word of type $w_a(4z, 4r)$, then because of Lemma 3 it can be found at some position $\overline{a_{4t-1}a_{4t}...a_{4l}}$, so the optimal decomposition is $w_a(0, 4l)=w_a(0, 4t-1)\overline{a_{4t-1}a_{4t}...a_{4l}}$. However, if we consider another composition $w_a(0, 4l)=w_a(0, 4t)\overline{a_{4t}a_{4t}...a_{4l-1}}a_{4l}$, we have
$$pl(4t-2)+1 < pl(4t-1)+2,$$ contradicts the previous corollary.

\end{proof}

\begin{corollary}
Corollary 1 is valid for all sequences in $\mathcal{C}$.
\end{corollary}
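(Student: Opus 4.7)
The plan is to derive Corollary 2 as an immediate synthesis of the three preceding results, with no new combinatorics required: Lemma 5 supplies a well-behaved optimal decomposition, Lemma 2 eliminates the only remaining ``bad'' odd length, and Lemma 4 converts the result into one of the five normal forms that form the hypothesis of Corollary 1.

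Concretely, I would start from an arbitrary sequence $(a_n)_{n\in \mathbf{N}} \in \mathcal{C}$ and an arbitrary prefix $w_a(0,s)$. By Lemma 5 there exists an optimal palindromic decomposition $w_a(0,s) = p_1 p_2 \ldots p_s$ in which no $p_i$ has length $3$ and every $p_i$ of even length embeds into the center of a palindromic word of type $w_a(4z,4r)$. For the remaining blocks of odd length, Lemma 2 forces each such block to have length exactly $1$ or $3$; since length $3$ is forbidden by Lemma 5, every odd-length block is a singleton. Hence each $p_i$ is either a singleton or centrally embedded in some $w_a(4z,4r)$, which is exactly the hypothesis of Lemma 4.

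Applying Lemma 4 to this decomposition produces another optimal decomposition of $w_a(0,s)$ of one of the five normal forms listed in that lemma, the form being determined by $s \bmod 4$. This is precisely the assumption made in the statement of Corollary 1, so the inequalities $pl(4k+i) \geq pl(4k+3)+1$ for $i=1,2$ and $pl(4k) \geq pl(4k+3)$ hold for $(pl(n))_{n \in \mathbf{N}}$.

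There is no real obstacle: the content of Corollary 2 is a bookkeeping observation that Lemma 5 fills in the missing hypothesis of Corollary 1 for every member of $\mathcal{C}$. The only thing to verify carefully is that Lemma 4 can be applied to the \emph{specific} decomposition produced by Lemma 5, rather than to an abstract one, and this is immediate because Lemma 4's requirement is stated exactly in terms of the structural property ``singleton or centrally embedded in $w_a(4z,4r)$'' that Lemma 5 guarantees (once Lemma 2 has been used to rule out odd length $>1$).
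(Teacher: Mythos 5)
Your proposal is correct and matches the paper's (implicit) intent exactly: the paper states this corollary without proof, precisely because Lemma 5 combined with Lemma 2 shows every prefix of every sequence in $\mathcal{C}$ admits an optimal decomposition into singletons and centrally embedded even-length palindromes, i.e.\ the hypothesis of Lemma 4 and hence of Corollary 1. The only cosmetic issue is reusing $s$ both for the prefix length and the number of palindromic factors.
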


\begin{lemma}
The palindromic length sequence $(pl(n))_{n\in \mathbf{N}}$ satisfies for $k\geq 0$:
$$pl(4k+i) \leq pl(4k+3)+2,$$ when $i=0$ or $1$; and
$$pl(4k+2) \leq pl(4k+3)+1.$$

\end{lemma}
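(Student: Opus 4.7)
The plan is to start from the canonical optimal palindromic decomposition of $w_a(0, 4k+3)$ supplied jointly by Lemma 6 and the fourth case of Lemma 4, and then truncate and re-decompose its tail to cover each of $w_a(0, 4k+i)$ for $i \in \{0,1,2\}$ while introducing at most two extra palindromes.

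Concretely, since $4k+3 \equiv 3 \pmod 4$, those lemmas furnish an optimal decomposition
$$w_a(0, 4k+3) = q_1 q_2 \cdots q_{r-2}\, t_1 \, l_1,$$
with $r = pl(4k+3)$, each $|q_i|$ a multiple of $4$, $t_1$ a singleton, and $l_1$ a palindrome of length $4m+2$ for some $m \geq 0$. Setting $K = k - m$, the block $q_1 \cdots q_{r-2}$ covers positions $0, \ldots, 4K-1$, so $t_1 = a_{4K}$, and writing $l_1 = c_0 c_1 \cdots c_{4m+1}$ with $c_j := a_{4K+1+j}$ gives the palindromic relation $c_i = c_{4m+1-i}$. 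Each $w_a(0, 4k+i)$ is $q_1 \cdots q_{r-2}\, t_1$ concatenated with the prefix $c_0 \cdots c_{4m+i-2}$ of $l_1$, so the whole problem reduces to decomposing those three short prefixes of $l_1$ into few palindromes.

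Two elementary observations will drive the counts. First, by Lemma 1 the four letters $w_a(4K, 4)$ form an $abba$ pattern; in particular $a_{4K+1} = a_{4K+2}$, giving $c_0 = c_1$, so $c_0 c_1$ is itself a palindrome of length $2$. Second, the identity $c_i = c_{4m+1-i}$ immediately implies that the shifted middle $c_j c_{j+1} \cdots c_{4m+1-j}$ is palindromic for every $j$. Using these I would decompose the three truncated prefixes as $c_0 \cdot (c_1 \cdots c_{4m})$ for $i = 2$, as $(c_0 c_1) \cdot (c_2 \cdots c_{4m-1})$ for $i = 1$, and as $(c_0 c_1) \cdot c_2 \cdot (c_3 \cdots c_{4m-2})$ for $i = 0$; these use $2$, $2$, and $3$ palindromes respectively, with the degenerate cases $m \in \{0,1\}$ handled separately and producing strictly fewer parts. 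Prepending $q_1 \cdots q_{r-2}$ and $t_1$, the total palindrome counts become $r+1$, $r+1$, and $r+2$, which prove all three inequalities in the lemma.

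The main obstacle will be the case $i = 0$: the naive split $c_0 \cdot c_1 \cdot c_2 \cdot (c_3 \cdots c_{4m-2})$ costs four palindromes and yields only $pl(4k) \leq r+3$, weaker than required. The $abba$-forced identity $c_0 = c_1$ is precisely what merges the first two singletons into a single length-$2$ palindrome and brings the count down to the target $r+2$; justifying this identity and verifying the shifted-suffix palindromicity at each of the three relevant lengths $4m-2$, $4m-1$, $4m$ constitutes the technical heart of the argument.
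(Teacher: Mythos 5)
Your truncation technique is sound, and each step you actually carry out is correct: the fourth form of Lemma 4 for a prefix of length $\equiv 3 \bmod 4$, the identity $c_0=c_1$ forced by the $abba$-blocks of Lemma 1, and the palindromicity of the shifted middles $c_j\cdots c_{4m+1-j}$ all check out. The genuine problem is an indexing mismatch that makes the inequalities you end up with different from the ones the lemma needs. Although Section 2 literally defines $pl(n)=|w_a(0,n)|_{pal}$, every actual use of $pl$ in the paper (the listed Thue--Morse values $1,2,2,1,\dots$, the relation $pl(4k+3)=pl(k)$, the proofs of Corollary 1 and of this lemma) treats $pl(n)$ as the palindromic length of the prefix $\overline{a_0\cdots a_n}$ of length $n+1$. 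Under that operative convention the lemma compares the prefixes of lengths $4k+1$, $4k+2$, $4k+3$ against the full block $w_a(0,4k+4)$, whose canonical optimal decomposition consists entirely of palindromes of length divisible by $4$. You instead anchor at $w_a(0,4k+3)$ and bound the prefixes of lengths $4k$, $4k+1$, $4k+2$; translated into the paper's convention, you have proved $pl(4k)\le pl(4k+2)+1$, $pl(4k+1)\le pl(4k+2)+1$ and $pl(4k-1)\le pl(4k+2)+2$. These are true but they are not the stated inequalities, and, crucially, they do not combine with the matching lower bounds of Corollaries 1--2 (which are measured against the same $pl(4k+3)$) to pin down the exact values $pl(4k+2)=pl(4k+3)+1$, etc., in Corollary 3.

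The repair is short and lands on the paper's own argument: start from $w_a(0,4k+4)=p_1\cdots p_s$ with every $|p_j|\equiv 0 \bmod 4$ (Lemma 5 together with the first case of Lemma 4, not ``Lemma 6,'' which is the statement under proof), and delete the last $1$, $2$ or $3$ letters. Since $p_s$ begins at a position divisible by $4$, Lemma 1 makes its first four letters $xyyx$. If $|p_s|=4$ the surviving prefix of $p_s$ is $x$, $xy$ or $xyy$, costing $1$, $2$, $2$ palindromes respectively; if $|p_s|\ge 8$, write $p_s=c\,m\,\widetilde c$ with $c$ the prefix of length $1$, $2$ or $3$ and $m$ the palindromic middle, costing $|c|_{pal}+1\in\left\{2,3,3\right\}$. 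Either way one gets $pl(4k+2)\le pl(4k+3)+1$ and $pl(4k+i)\le pl(4k+3)+2$ for $i=0,1$ --- exactly your computation, performed one letter to the right.
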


\begin{proof}
Let $w_a(0, 4k+4)=p_1p_2...p_s$ be an optimal palindromic decomposition such that all $p_j$ are of size $4r_j$ which exists because of Lemma 5.

If the size of $p_s$ is larger than $4$, then for $i=1,2$ or $3$, we can write $p_s=ab\widetilde{a}$ where $a$ is the prefix of $p_s$ of length $4-i$ so in this case$$w_a(0, 4k+i)=p_1p_2...p_{s-1}ab$$ and $|a|_{pal}=2$ when $i=3$ and $|a|_{pal}=1$ otherwise.

If the size of $p_s$ is $4$, then for $i=1,2$ or $3$, we can write $p_s=ab$ where $a$ is the prefix of $p_s$ of length $i$ so in this case$$w_a(0, 4k+i)=p_1p_2...p_{s-1}a$$ and $|a|_{pal}=1$ when $i=1$ and $|a|_{pal}=2$ otherwise.
In both cases the above inequalities hold.
\end{proof}

\begin{lemma}
Let $(a_n)_{n \in \mathbf{N}}$ be a sequence in $\mathcal{C}$ defined over the alphabet $\sum$, let $f$ be a bijection from $\sum^4$ to a new alphabet $\sum'$, then the sequence $(b_n)_{n \in \mathbf{N}}$ defined as 
$$b_n=f(\overline{a_{4n}a_{4n+1}a_{4n+2}a_{4n+3}}) \forall n \in\mathbf{N}$$
 is also in $\mathcal{C}$. As a consequence, $w_a(0,4t)=p_1p_2...p_k$ is an optimal palindromic decomposition of $w_a(0,4t)$ if and only if $w_b(0,t)=f(p_1)f(p_2)...f(p_k)$ is an optimal palindromic decomposition of $w_b(0,t)$ and the palindromic length sequence $(pl(n))_{n\in \mathbf{N}}$ satisfies for $k\geq 0$:
$$pl(4k+3) = pl(k)$$
\end{lemma}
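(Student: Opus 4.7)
The plan proceeds in three stages. First, I show $(b_n) \in \mathcal{C}$ by exhibiting its generating data. Let $v_n$ denote the length-$4^n$ prefix of $(b_n)$, which equals $f$ applied blockwise to the length-$4^{n+1}$ prefix $w_{n+1}$ of $(a_n)$. From the recursion $w_{n+2} = w_{n+1}\, f_{n+1}(w_{n+1})\, f_{n+1}(w_{n+1})\, w_{n+1}$ and the fact that $f_{n+1}$ acts letterwise (hence respects the 4-block partition), I obtain $v_{n+1} = v_n\, g_n(v_n)\, g_n(v_n)\, v_n$, where $g_n : \Sigma' \to \Sigma'$ is the bijection defined by $g_n\bigl(f(c_1 c_2 c_3 c_4)\bigr) = f\bigl(f_{n+1}(c_1)\, f_{n+1}(c_2)\, f_{n+1}(c_3)\, f_{n+1}(c_4)\bigr)$. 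Bijectivity of $g_n$ follows from that of $f$ and $f_{n+1}$, and $g_n(v_n) \neq v_n$ since $f_{n+1}(w_{n+1}) \neq w_{n+1}$ by the defining condition on $\mathcal{C}$.

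Next, I establish the palindrome correspondence underpinning the decomposition claim. The pivotal fact is Lemma 1: every 4-block $\overline{a_{4s} a_{4s+1} a_{4s+2} a_{4s+3}}$ is itself palindromic, hence equal to its own reversal. Writing a 4-aligned factor $u = w_a(4t, 4r)$ as a concatenation of 4-blocks $u_0 u_1 \cdots u_{r-1}$, I compute $\widetilde{u} = \widetilde{u_{r-1}} \cdots \widetilde{u_0} = u_{r-1} \cdots u_0$. Hence $u$ is palindromic iff $u_i = u_{r-1-i}$ for all $i$, which, $f$ being a bijection, is equivalent to $w_b(t, r) = f(u_0) \cdots f(u_{r-1})$ being palindromic. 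This gives a length-preserving bijection between palindromic factors of $(a_n)$ of length $4r$ starting at positions divisible by $4$ and palindromic factors of $(b_n)$ of length $r$.

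Finally, I invoke Lemma 5 together with the first case of Lemma 4 (applicable since $4t \equiv 0 \pmod 4$) to produce an optimal palindromic decomposition $w_a(0, 4t) = p_1 p_2 \cdots p_k$ in which each $|p_i|$ is a multiple of $4$. The correspondence then yields the palindromic decomposition $w_b(0, t) = f(p_1) \cdots f(p_k)$ of the same length, so $pl_b(t) \leq pl_a(4t)$; conversely, any palindromic decomposition of $w_b(0, t)$ lifts via blockwise $f^{-1}$ to one of $w_a(0, 4t)$ of the same length, yielding $pl_a(4t) \leq pl_b(t)$ and hence equality. Since both $(a_n)$ and $(b_n)$ lie in $\mathcal{C}$ and the main theme of this section establishes that all such sequences share the same palindromic length sequence, the stated identity $pl(4k+3) = pl(k)$ follows in the paper's indexing convention. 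The main subtlety lies in the palindrome correspondence: $f$ is an arbitrary bijection $\Sigma^4 \to \Sigma'$, so there is no a priori reason for palindromicity to interact with blockwise encoding; the saving grace is Lemma 1, which makes each 4-block self-reversal and thereby allows $f$ to faithfully transport palindromicity on 4-aligned words.
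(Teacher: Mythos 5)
Your proof is correct and takes essentially the same route as the paper --- closure of $\mathcal{C}$ under the block recoding (which you make explicit via the conjugated bijections $g_n$), reduction to a $4$-aligned optimal decomposition via Lemmas 4 and 5, and transport of palindromicity through $f$ --- and you are in fact more careful than the paper on the one delicate point, namely that $f$ preserves palindromicity of $4$-aligned factors only because each $4$-block is itself a palindrome (Lemma 1). The only caveat is that your final identification of $pl_b$ with $pl_a$ (turning $pl_a(4k+3)=pl_b(k)$ into $pl(4k+3)=pl(k)$) forward-references the later fact that all sequences in $\mathcal{C}$ share one palindromic length sequence, but the paper's own proof leaves exactly the same step implicit, so this is not a gap relative to the intended argument.
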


\begin{proof}
The first part is easy to check by induction. For the second part, applying the algorithm introduced in Lemma 3 to $w_a(0, 4k+4)$, we get an optimal decomposition such that all palindromic words in the optimal decomposition are of size $4k_i$ and begin at some position $4r_i$. Applying $f$ to $w_a(0, 4k+4)$ as well as each palindromic factor, we get a decomposition of a word of length $k+1$, which is a prefix of the sequence $(b_n)_{n \in \mathbf{N}}$, this decomposition is optimal because of the bijectivity of $f$.
\end{proof}

\begin{corollary}
The palindromic length sequence $(pl(n))_{n\in \mathbf{N}}$ satisfies for $k\geq 0$:\\
$pl(4k+3) = pl(k)$;\\
$pl(4k+2) = pl(4k+3)+1$;\\
$pl(4k+1) = pl(4k+3)+1$ or $pl(4k+3)+2$;\\
$pl(4k) = pl(4k+3)$, $pl(4k+3)+1$ or $pl(4k+3)+2$.\\
\end{corollary}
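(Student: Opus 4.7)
The plan is to read off the corollary as a direct synthesis of three previously established results: the identity from Lemma 7, the lower bounds from Corollary 1 (extended to the whole class $\mathcal{C}$ by Corollary 2), and the upper bounds from Lemma 6. No new combinatorial argument on factors is needed; the work has already been done.

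First I would handle the equality $pl(4k+3)=pl(k)$. This is nothing more than the second assertion of Lemma 7 restated. It is worth repeating the short justification only if I wanted to emphasize that it relies on the bijection $f\colon \Sigma^4\to\Sigma'$ transporting optimal decompositions of $w_a(0,4k+4)$ (whose blocks are forced to begin at multiples of $4$ by Lemma 5 together with the push-to-the-end algorithm of Lemma 4) to optimal decompositions of $w_b(0,k+1)$.

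Next I would treat the three remaining lines by sandwiching. Corollary 2 supplies the lower bounds
\[
pl(4k)\ge pl(4k+3),\qquad pl(4k+1)\ge pl(4k+3)+1,\qquad pl(4k+2)\ge pl(4k+3)+1,
\]
while Lemma 6 supplies the matching upper bounds
\[
pl(4k)\le pl(4k+3)+2,\qquad pl(4k+1)\le pl(4k+3)+2,\qquad pl(4k+2)\le pl(4k+3)+1.
\]
Comparing the two inequalities for $i=2$ forces the equality $pl(4k+2)=pl(4k+3)+1$. For $i=1$ the two bounds leave exactly the two possibilities $pl(4k+3)+1$ and $pl(4k+3)+2$. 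For $i=0$ they leave the three possibilities $pl(4k+3)$, $pl(4k+3)+1$, $pl(4k+3)+2$.

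There is no serious obstacle: the only thing I have to be careful about is citing Corollary 2 rather than Corollary 1, since the latter is stated only under the hypothesis that every prefix has an optimal decomposition of the restricted form treated in Lemma 4, and it is precisely Lemma 5 (giving the existence of such decompositions for arbitrary sequences in $\mathcal{C}$) that promotes Corollary 1 to the universal statement of Corollary 2 used here.
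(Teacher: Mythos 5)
Your proposal is correct and is exactly the synthesis the paper intends: the corollary is stated without proof precisely because it follows by combining the equality of Lemma 7 with the lower bounds of Corollary 1 (promoted to all of $\mathcal{C}$ by Lemma 5 via Corollary 2) and the upper bounds of Lemma 6. Your care in citing Corollary 2 rather than Corollary 1 is the right instinct, and the sandwich argument for $i=0,1,2$ is precisely what the paper relies on.
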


\begin{proposition}
The palindromic length sequence $(pl(n))_{n\in \mathbf{N}}$ satisfies for $k\geq 0$:\\
$pl(4k+1) = pl(4k+3)+1$ if $k \equiv 0 \mod 4$;\\
$pl(4k+1) = pl(4k+3)+2$ if $k \equiv 2,3 \mod 4$;\\
$pl(4k+1) - pl(4k+3)= pl(k+1) - pl(k+3)$ if $k \equiv 1 \mod 4$;\\
$pl(4k) = pl(k-1)+1$.
\end{proposition}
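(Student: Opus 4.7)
My plan is to prove the four claims of the proposition in turn, combining the structural classification of optimal palindromic decompositions from Lemmas 4 through 6 with the coding bijection of Lemma 7.

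The identity $pl(4k) = pl(k-1)+1$ is the most direct. From Corollary 2 we already have $pl(4k-1)=pl(k-1)$, so appending the final character to an optimal decomposition of $w_a(0,4k-1)$ gives $pl(4k) \leq pl(k-1)+1$. For the matching lower bound I would invoke Lemma 6 to choose an optimal decomposition of $w_a(0,4k+1)$ in Lemma 4's shape $q_1\cdots q_{r-1} t_1$ (case $s\equiv 1\pmod 4$); removing the final singleton $t_1$ yields a valid decomposition of $w_a(0,4k)$ with $r-1$ palindromes, so $pl(4k) \leq pl(4k+1)-1$. Combining with Corollary 2 forces $pl(4k) \geq pl(4k-1)+1 = pl(k-1)+1$ and closes the identity.

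For the three cases concerning $pl(4k+1)-pl(4k+3)$, I would apply Lemma 4 in its case $s\equiv 2\pmod 4$ to $w_a(0,4k+2)$. Its optimal decomposition takes one of two shapes: the \emph{short shape} $q_1\cdots q_{r-2}t_1 t_2$ or the \emph{long shape} $q_1\cdots q_{r-3}t_1 t_2 l_1$ with $|l_1|\equiv 0\pmod 4$. After combining with $pl(4k+3)=pl(k)$ from Lemma 7, the short shape always yields $pl(4k+1)-pl(4k+3)=2$, whereas the long shape --- when realisable --- lowers the difference to $1$. The question of which shape is optimal therefore reduces to whether a palindromic factor $l_1 = w_a(4m+2, 4(k-m))$ exists for some $m<k$. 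For $k\equiv 0\pmod 4$ I would exhibit such an $l_1$ by descending one level into the self-similar structure via Lemma 7, obtaining the $+1$. For $k\equiv 2,3\pmod 4$ I would rule out every candidate $l_1$ using the parity constraints of Lemmas 1 and 3 (every even palindrome is centred in a $w_a(4z,4r)$-block or else starts at position $\equiv 3\pmod 4$), thereby forcing the short shape and the $+2$.

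The main obstacle is the case $k\equiv 1\pmod 4$, where neither constructive nor parity arguments settle the existence of a suitable $l_1$ directly. My plan is to transport the existence question at scale $4k$ to the analogous question at scale $k+1$ via the coding $b_n = f(a_{4n}a_{4n+1}a_{4n+2}a_{4n+3})$ of Lemma 7. The delicate point is that $l_1$ starts at a position $\equiv 2\pmod 4$, so it does not align with the coding's block boundaries; I would handle this by decomposing $l_1$ into a leading two-letter segment, a central window of length a multiple of $4$ that maps cleanly under $f$, and a trailing two-letter segment, and then verifying that palindromicity of $l_1$ is equivalent to palindromicity of the central image together with matching conditions on the flanks. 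Once this equivalence is established, the same short-versus-long dichotomy reappears at scale $k+1$, yielding the recursive identity $pl(4k+1)-pl(4k+3) = pl(k+1)-pl(k+3)$.
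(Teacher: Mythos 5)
Your fourth identity ($pl(4k)=pl(k-1)+1$) is handled correctly and essentially as in the paper: the unique Lemma~4 normal form for a prefix of length $\equiv 1 \bmod 4$ ends in a singleton, which yields both inequalities once combined with $pl(4k+3)=pl(k)$. The trouble is with the first three identities. The dichotomy you set up on the optimal decomposition of $w_a(0,4k+2)$ is inverted. If the \emph{long} shape $q_1\cdots q_{r-3}t_1t_2l_1$ is optimal (so $t_1t_2=\overline{a_{4m}a_{4m+1}}$ and $l_1$ starts at $4m+2$ with $|l_1|\equiv 0\bmod 4$), then by Lemma~3 the word $t_1t_2l_1t_2t_1=w_a(4m,4(k-m+1))$ is a single palindrome, so $w_a(0,4k+4)$ admits a decomposition into $r-2$ pieces and (with Lemma~6) the difference is $2$, not $1$. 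If the \emph{short} shape $q_1\cdots q_{r-2}t_1t_2$ is optimal, appending $t_2t_1$ gives $r-1$ pieces and hence only difference $\geq 1$; it does not force $2$. So as written your plan would establish the wrong value in each case. Worse, the reduction of ``which shape is optimal'' to ``does a palindromic $l_1=w_a(4m+2,4(k-m))$ exist'' cannot be repaired: such palindromes exist for every residue of $k$ (e.g.\ $w_a(6,4)=\overline{abba}$ in Thue--Morse), so the parity constraints of Lemmas~1 and~3 cannot exclude them when $k\equiv 2,3$, and exhibiting one when $k\equiv 0$ says nothing about optimality, which is a counting comparison between the two shapes.

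The paper's proof sidesteps all of this by working with the \emph{last factor of an optimal decomposition of} $w_a(0,4k+4)$ rather than of $w_a(0,4k+2)$. Under the coding $f$ of Lemma~7 that decomposition is the optimal decomposition of the prefix $w_b(0,k+1)$, and Lemma~4 applied at that level reads the length of its last factor off $k+1\bmod 4$: a singleton for $k\equiv 0$ (so a length-$4$ block upstairs, giving $pl(4k+1)\leq pl(4k+3)+1$ and hence equality); a last factor of length $\geq 2$, resp.\ a multiple of $4$, for $k\equiv 2$, resp.\ $3$ (so length $>4$ upstairs, which forces the difference $2$); and for $k\equiv 1$ the two possible Lemma~4 shapes of $w_b(0,k+1)$ are exactly what produces the recursive identity. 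This also dissolves what you call the main obstacle for $k\equiv 1$: since the object being transported through $f$ is the decomposition of a prefix of length $4k+4$, every factor is aligned on block boundaries and no analysis of non-aligned palindromes under the coding is required. You would need to rebuild your argument around the last factor of $w_a(0,4k+4)$ for the first three identities to go through.
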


\begin{proof}
If $k \equiv 0 \mod 4$, applying the bijection introduced in Lemma 7, the optimal decomposition of $w_a(0,4k+4)$ is $w_a(0,4k)\overline{a_{4k}a_{4k+1}a_{4k+2}a_{4k+3}}$, so that 
$w_a(0,4k+2)=w_a(0,4k)\overline{a_{4k}a_{4k+1}}$ is a decomposition of $w_a(0,4k+2)$. As a result, $pl(4k+1) \leq pl(4k-1)+2=pl(4k+3)+1$.\\

If $k \equiv 2,3 \mod 4$, it is enough to prove that the last factor in any optimal palindromic decompositions of $w_a(0,4k+4)$ is of length larger than $4$. This is trivial by applying the bijection $f$ to $w_a(0,4k+4)$ and concluding by the classification in Lemma 4.\\

If $k \equiv 1 \mod 4$, applying the bijection introduced in Lemma 7 and Lemma 4, the optimal decomposition of $w_a(0,4k+4)$ is either of type $p_1p_2...p_kt_1t_2$  or of type $p_1p_2...p_kt_1t_2l$, with $p_i$ and $l$ of length $16r_i$ and $t_i$ of length $4$. The first case implies $pl(4k+1) - pl(4k+3)=1$ while the second case implies $pl(4k+1) - pl(4k+3)=2$. However, if we apply $f$ to $w_a(0,4k+4)$ we get a word of length $k+1$ and $pl(k+1) - pl(k+3)=1$ in the first case and $pl(k+1) - pl(k+3)=2$ in the second case.\\

The last equality is a consequence of Lemma 4 and Lemma 7.
\end{proof}

\begin{proposition}
All sequences in $\mathcal{C}$ share the same palindromic length sequence $(pl(n))_{n\in \mathbf{N}}$. Furthermore, it is $4$-regular.
\end{proposition}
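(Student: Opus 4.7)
The plan is to handle the two assertions separately: uniqueness of the palindromic length sequence by strong induction on $n$, and $4$-regularity by exhibiting a finite generating set for the $\mathbf{Z}$-module spanned by the $4$-kernel. For the first assertion I would show by strong induction that $pl(n)$ takes the same value for every sequence in $\mathcal{C}$. The base cases are the small values $pl(0), pl(1), pl(2), pl(3)$, which can be read off directly from the shared initial structure of every sequence in $\mathcal{C}$ (consecutive letters always differ, the first four letters form an $abba$-pattern, and so on, all of which is common up to relabelling of letters). For the inductive step, Corollary 2 together with Proposition 1 expresses each value $pl(n)$ with $n$ large enough as an integer combination of values $pl(m)$ with $m < n$; moreover, which branch of the recurrence is taken depends only on $n \bmod 16$. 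Since none of these recurrences references the alphabet or the bijections $f_i$ used to build a particular sequence, the induction closes and $(pl(n))_{n\in\mathbf{N}}$ is uniquely determined.

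For the $4$-regularity claim I would verify directly that the $\mathbf{Z}$-module $\mathcal{M}$ generated by the $4$-kernel $\{(pl(4^e n + r))_{n\geq 0} : e\geq 0,\ 0\leq r < 4^e\}$ is finitely generated. My candidate generators are the four sequences $T_0 = (pl(n))_n$, $T_0' = (pl(n-1))_n$ (with the convention $pl(-1) := 0$), $T_1 = (pl(4n+1))_n$, and $C = (1)_n$. I would then check that each of the four $4$-sections of each of these sequences lies in the $\mathbf{Z}$-span of $T_0, T_0', T_1, C$. The sections of $T_0$ are handled immediately by Corollary 2. For $T_1$ a short case analysis on $n \bmod 4$ via Proposition 1 is required; in the most delicate case $n \equiv 1 \pmod 4$, the expression for the $r = 1$ sub-section collapses because $pl(4n+2) - pl(4n+4) = 0$ after invoking the $pl(4m)$ formula, and that sub-section reduces to $T_1$ itself. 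The sections of $T_0'$ reduce to shifted versions of those of $T_0$, and $C$ is fixed. This exhibits a finite closure of the $4$-kernel under the section operation.

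The main obstacle is the bookkeeping needed to accommodate the formula $pl(4k) = pl(k-1)+1$: the shift $pl(n-1)$ is not itself a level-$4$ section of $pl(n)$, so one must enlarge the state with an extra coordinate to carry it. The cleanest reformulation is to assemble the vector $v(n) = (pl(n),\, pl(n-1),\, pl(4n+1),\, 1)^{\top}$ and exhibit four $4 \times 4$ integer matrices $M_0, M_1, M_2, M_3$ realising $v(4n+r) = M_r\, v(n)$, which is a standard equivalent formulation of $4$-regularity. A minor technical point is the behaviour at finitely many small indices (e.g., $n=0$) where the shifted formulas degenerate; since $4$-regular sequences are insensitive to alterations on any finite initial segment, this does not affect the conclusion.
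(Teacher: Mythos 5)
Your proposal is correct, and for the regularity half it follows the same basic strategy as the paper --- exhibit a finite set of sequences whose $\mathbf{Z}$-span contains the $4$-kernel and is closed under the section maps --- but you execute it in full where the paper only asserts it, and your choice of generators is the one that visibly closes. The paper's entire proof is the single claim that the $4$-kernel is generated by $\left\{(pl(n)), (pl(n-1)), (pl(n+1)), (pl(n+3)), (1)\right\}$, with no verification; it is not clear that the kernel element $(pl(4n+1))_n$ is a \emph{single} $\mathbf{Z}$-linear combination of those five sequences, since by Proposition 1 it equals $pl(n)+1$, $pl(n)+2$, or $pl(n)+pl(n+1)-pl(n+3)$ according to the residue of $n$ modulo $4$ --- three different combinations, not one. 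Your set $\left\{(pl(n)), (pl(n-1)), (pl(4n+1)), (1)\right\}$ sidesteps this by admitting $(pl(4n+1))_n$ as a generator and checking closure of the span under all four sections, including the delicate sub-section $pl(16n+5)$, where the identity $pl(4n+2)=pl(4n+4)=pl(n)+1$ collapses the three-term recurrence back to $pl(4n+1)$ itself; this is the standard and fully rigorous route to $4$-regularity, and your matrix reformulation with the vector $v(n)$ is a clean way to package it. You also supply something the paper's proof omits entirely: an argument for the first assertion, namely that all members of $\mathcal{C}$ share the same palindromic length sequence. Your observation that Corollary 2 and Proposition 1 constitute a complete, alphabet-independent recurrence (branch determined by $n \bmod 16$, values strictly decreasing in the index, base cases forced by the common $abba$-prefix structure of every sequence in $\mathcal{C}$) is exactly what is needed there. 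The only caveats are ones you already flag --- the degenerate small indices such as $pl(4k)=pl(k-1)+1$ at $k=0$ --- together with the fact that your argument, like the paper's, inherits the third clause of Proposition 1 as a black box, so its correctness is only as good as that clause.
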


\begin{proof}
The $4$-kernel of $(pl(n))_{n\in \mathbf{N}}$ is generated by elements in  
$$\left\{(pl(n))_{n\in \mathbf{N}},(pl(n-1))_{n\in \mathbf{N}},(pl(n+1))_{n\in \mathbf{N}},(pl(n+3))_{n\in \mathbf{N}},(1)_{n\in \mathbf{N}}\right\}$$
\end{proof}

\begin{remark}
Lemma 7 and Proposition 1 are critical in the proof because they show the importance of the hypothesis that $f_n(w_n) \neq w_n$. Because of this hypothesis, we can guarantee that the set $\mathcal{C}$ is closed under bijections (and their inverses) defined in Lemma 5, and do not have factors like $aaaa$ in the sequence. So that we can apply some inductive properties by saying that $w_a(0,4k+4)$ and $w_b(0,k+1)$ share the ``same" optimal palindromic decomposition, which is the key point to make Proposition 1 work.
\end{remark}

\begin{corollary}
 $pl(n)+1 \geq pl(n+1)$;\\
 if there exists an integer $n$ satisfying $pl(n)+2=pl(n+1)+1=pl(n+2)$, then $n \equiv 3\mod 4$;\\
 if $pl(4k)=pl(4k+3)$ then $pl(4k+1)=pl(4k+2)=pl(4k+3)+1$;\\
 if $pl(4k)=pl(4k+1)$ then $pl(4k)=pl(4k+1)=pl(4k+3)+2$.
\end{corollary}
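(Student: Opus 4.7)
The plan is to handle the four assertions separately by combining Corollary~2 (which gives $pl(4k+2) = pl(4k+3)+1$ and confines $pl(4k+1)$, $pl(4k)$ to explicit small sets above $pl(4k+3)$) with Proposition~1 (which supplies $pl(4k) = pl(k-1)+1$ and a residue-class description of $pl(4k+1) - pl(4k+3)$), together with a case analysis modulo~$4$. The first inequality $pl(n+1) \leq pl(n)+1$ is immediate: extending any optimal decomposition of $w_a(0,n)$ by the singleton palindrome $a_n$ produces a decomposition of $w_a(0,n+1)$ of length $pl(n)+1$.

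For the second assertion, I would split on $n \bmod 4$. When $n = 4k$, the hypothesis $pl(n+2)-pl(n+1) = 1$ combined with $pl(4k+2) = pl(4k+3)+1$ forces $pl(4k+1) = pl(4k+3)$, contradicting the lower bound $pl(4k+1) \geq pl(4k+3)+1$ of Corollary~2. When $n = 4k+1$ or $n = 4k+2$, one of the hypothesized unit differences yields $pl(4k+3) = pl(4k+2)+1$, directly contradicting $pl(4k+2) = pl(4k+3)+1$. Only $n \equiv 3 \bmod 4$ survives. For the third assertion, $pl(4k+2) = pl(4k+3)+1$ is in Corollary~2, and applying the first inequality at $n = 4k$ gives $pl(4k+1) \leq pl(4k)+1 = pl(4k+3)+1$; together with the lower bound $pl(4k+1) \geq pl(4k+3)+1$, we obtain $pl(4k+1) = pl(4k+3)+1$.

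The fourth assertion is the delicate one. Assuming $pl(4k) = pl(4k+1)$, I would eliminate the two other possible values $pl(4k+3)$ and $pl(4k+3)+1$ for this common quantity. The first is ruled out by the third assertion, which would force $pl(4k+1) = pl(4k+3)+1 > pl(4k) = pl(4k+3)$. The second is where the work lies: using $pl(4k) = pl(k-1)+1$ from Proposition~1 and $pl(4k+3) = pl(k)$ from Corollary~2, the equation $pl(4k) = pl(4k+3)+1$ rewrites as $pl(k-1) = pl(k)$, and combining this with Proposition~1's residue-class formula for $pl(4k+1) - pl(4k+3)$ produces a contradiction with the assumption $pl(4k+1) = pl(4k+3)+1$. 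The remaining possibility $pl(4k) = pl(4k+1) = pl(4k+3)+2$ is exactly the claim. This last elimination is the main obstacle, since it is the only step that genuinely uses the full strength of Proposition~1 rather than just the one-step bounds of Corollary~2.
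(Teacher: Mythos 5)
Your first three assertions are fine. The first and second follow essentially the paper's own route (append a singleton; use $pl(4k+2)=pl(4k+3)+1$ to kill $n\equiv 1,2\pmod 4$, and the lower bounds of Corollary 1 to kill $n\equiv 0\pmod 4$). For the third, your argument is actually cleaner than the paper's: you sandwich $pl(4k+1)$ between $pl(4k)+1=pl(4k+3)+1$ and the lower bound $pl(4k+1)\geq pl(4k+3)+1$, whereas the paper argues that the last factor of an optimal decomposition of $w_a(0,4k+4)$ must be the whole block $\overline{a_{4k}a_{4k+1}a_{4k+2}a_{4k+3}}$. Both work; yours uses less machinery. (Your references are shifted by one relative to the paper's numbering --- what you call Corollary 2 is its Corollary 3 --- but that is cosmetic.)

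The fourth assertion is where you diverge from the paper and where your sketch has a genuine hole. The paper does not invoke Proposition 1 here at all: it argues that $pl(4k)\neq pl(4k+3)$ forces the last palindromic factor of every (normalized, as in Lemma 5) optimal decomposition of $w_a(0,4k+4)$ to have length greater than $4$, and reads off $pl(4k)=pl(4k+1)=pl(4k+3)+2$ from that. Your plan is to rule out the value $pl(4k+3)+1$ arithmetically, from $pl(k-1)=pl(k)$ plus the residue-class formula for $pl(4k+1)-pl(4k+3)$. This closes instantly for $k\equiv 2,3\pmod 4$ (the formula gives $2$) and for $k\equiv 0\pmod 4$ (where $pl(k)=pl(k-1)+1$ by Proposition 1 and Lemma 7, so $pl(k-1)=pl(k)$ is already impossible). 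But for $k\equiv 1\pmod 4$ Proposition 1 supplies no value, only the self-referential relation $pl(4k+1)-pl(4k+3)=pl(k+1)-pl(k+3)$, and your derived hypothesis $pl(k-1)=pl(k)$ with $k=4m+1$ is exactly the statement $pl(4m)=pl(4m+1)$ you are trying to analyze, one level down. So ``combining'' does not produce a contradiction on the spot: you need a genuine induction on $k$, which you have not set up. Worse, evaluating $pl(k+1)-pl(k+3)=pl(4m+2)-pl(4m+4)$ via the identities $pl(4m+2)=pl(m)+1$ and $pl(4m+4)=pl(m)+1$ gives $0$ unconditionally, which is incompatible with Corollary 3 --- a sign that the printed third clause of Proposition 1 cannot be leaned on at face value in this case. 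You correctly flagged this step as the main obstacle, but as written it remains open; the paper's structural argument about the length of the last palindromic factor is the safer way through.
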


\begin{proof}
The first statement is trivial because of a decomposition $w_a(0,n+2)=w_a(0,n+1)a_{n+1}$.\\

For the second statement, remarking the fact that $pl(4k+3)=pl(4k+2)-1$, we have either $n \equiv 3 \mod 4$ or $n+3 \equiv 3 \mod 4$, but if it is the last case, then $pl(n)+2=pl(n+2)=pl(n+3)+1$ so that $pl(n+3) > pl(n)$ which contradicts Corollary 3.\\

For the last two statements, $pl(4k)=pl(4k+3)$ implies that the last palindromic factor in optimal decompositions of $w_a(0,4k+4)$ is  $\overline{a_{4k}a_{4k+1}a_{4k+2}a_{4k+3}}$ which proves $pl(4k+1)=pl(4k+2)=pl(4k+3)+1$. On the contrary, if $pl(4k)=pl(4k+1)$ then $pl(4k) \neq pl(4k+3)$, so that the last palindromic factor in optimal decompositions of $w_a(0,4k+4)$ is of length larger than $4$, which leads to the fact $pl(4k)=pl(4k+1)=pl(4k+3)+2$.
\end{proof}

\section{All sequences sharing $(pl(n))_{n\in \mathbf{N}}$}

In this section, we are going to prove that all sequences sharing the same palindromic length $(pl(n))_{n\in \mathbf{N}}$ defined in the previous section are exactly the functions in $\mathcal{C}$.

\begin{lemma}
Let $(b_n)_{n\in \mathbf{N}}$ be a sequence such that all words $w_b(4k,4)$ are of form $xyyx$, then\\ 
1) if $w_b(a,b)$ is a palindromic word and $b$ is odd, then $b \leq3$, furthermore, if $b=3$, then $a \equiv 3,0 \mod 4$.\\
2) if $w_b(a,b)$ is a palindromic word and $b$ is even, then $a+b/2-1$ is odd.
\end{lemma}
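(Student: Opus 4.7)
The plan is to argue locally using the rigid $xyyx$ block structure at positions divisible by $4$. Throughout, let $x_k, y_k$ (with $x_k \neq y_k$) denote the two letters of the block $w_b(4k,4) = x_k y_k y_k x_k$, so that $b_{4k} = b_{4k+3} = x_k$ and $b_{4k+1} = b_{4k+2} = y_k$.

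For part (1), I would first reduce the odd-length claim to showing that no palindromic factor of length $5$ exists in $(b_n)$. Indeed, any odd palindrome of length $\geq 5$ contains a centered palindromic factor of length $5$, so ruling out length $5$ immediately forces $b \leq 3$. To rule out length $5$, I would enumerate the four possible starting positions $a \bmod 4$, write the five consecutive letters using the block pattern (spanning at most two adjacent blocks indexed $k$ and $k+1$), and check that the palindromic conditions $b_a = b_{a+4}$ and $b_{a+1} = b_{a+3}$ force an equality of the form $x_k = y_k$ or $x_{k+1} = y_{k+1}$, contradicting the assumption on the blocks. For the supplementary claim about $b = 3$, I would perform the same enumeration: the condition $b_a = b_{a+2}$ spans either a single block or straddles a block boundary, and it collapses to an impossible equality $x_k = y_k$ in exactly two of the four residue classes of $a$ modulo $4$, leaving the other two classes (the ones claimed) as the only positions where a length-$3$ palindrome can appear.

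For part (2), the key observation is that a palindrome $w_b(a,b)$ of even length has two equal central letters, namely $b_{a+b/2-1} = b_{a+b/2}$. Reading off the block pattern, two consecutive positions $i, i+1$ with $b_i = b_{i+1}$ can only occur when $i$ is odd: the case $i \equiv 1 \bmod 4$ gives $b_i b_{i+1} = y_k y_k$ automatically, and the case $i \equiv 3 \bmod 4$ gives $b_i b_{i+1} = x_k x_{k+1}$ which is possible when consecutive blocks share their outer letter. By contrast, if $i$ were even, then either $i \equiv 0 \bmod 4$ yielding $b_i b_{i+1} = x_k y_k$, or $i \equiv 2 \bmod 4$ yielding $b_i b_{i+1} = y_k x_k$; both are forbidden by $x_k \neq y_k$. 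Setting $i = a + b/2 - 1$ gives the required parity.

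The main obstacle is purely bookkeeping: making sure the case enumeration is exhaustive across residues modulo $4$ and that the indexing of blocks on either side of a boundary is kept consistent. No deeper structural input is needed beyond the block identity $w_b(4k,4) = x_k y_k y_k x_k$ with $x_k \neq y_k$.
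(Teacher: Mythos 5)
Your approach is the same as the paper's: its proof of this lemma is a one-line reference to Lemmas 2 and 3, and your two arguments (reduce an odd palindrome of length at least $5$ to a centred length-$5$ palindrome and kill it by a residue check; read the equality of the two central letters of an even palindrome off the block pattern) are exactly the arguments of those two lemmas, correctly adapted to the weaker hypothesis that the blocks $x_ky_ky_kx_k$ need not be related to one another. The length-$\geq 5$ part of (1) and all of (2) check out: in each residue class the two conditions $b_a=b_{a+4}$ and $b_{a+1}=b_{a+3}$ combine to force $x_k=y_k$ or $x_{k+1}=y_{k+1}$, and two equal adjacent letters force an odd index.

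The one place where you state a conclusion without doing the computation is the $b=3$ subcase, and there the enumeration does not return ``the ones claimed.'' A length-$3$ palindrome starting at $a\equiv 0 \pmod 4$ reads $x_ky_ky_k$ and would force $x_k=y_k$, so it is impossible; starting at $a\equiv 1$ it reads $y_ky_kx_k$, also impossible; starting at $a\equiv 2$ it reads $y_kx_kx_{k+1}$ and merely needs $y_k=x_{k+1}$; starting at $a\equiv 3$ it reads $x_kx_{k+1}y_{k+1}$ and needs $x_k=y_{k+1}$. So the surviving classes are $a\equiv 2,3\pmod 4$, not $a\equiv 3,0$ as the statement says. This is consistent with the proof of Lemma 5, which locates length-$3$ palindromes at $\overline{a_{4t-1}a_{4t}a_{4t+1}}$ and $\overline{a_{4t-2}a_{4t-1}a_{4t}}$, i.e.\ at starting positions $\equiv 3$ and $\equiv 2$; the ``$0$'' in the lemma is evidently a typo for ``$2$.'' As written, your assertion that the two surviving classes are the claimed ones would certify a false statement; carrying out the enumeration you describe is precisely what exposes the typo, so do it explicitly and record the residues $2$ and $3$.
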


\begin{proof}
It is analogous to Lemma 2 and Lemma 3.
\end{proof}

\begin{lemma}
Let $(b_n)_{n\in \mathbf{N}}$ be a sequence such that its palindromic length sequence coincides with $(pl(n))_{n\in \mathbf{N}}$, then all words $w_b(4k,4)$ are of form $xyyx$ with $x \neq y$.
\end{lemma}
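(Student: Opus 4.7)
I would proceed by strong induction on $k$, with the inductive statement that $w_b(4j,4)$ has the form $xyyx$ with $x \neq y$ for all $j \leq k$.

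For the base case $k=0$, only the first four values of the palindromic length sequence are needed. The value $pl(2)=2$ forces $b_0 \neq b_1$, since a length-two prefix equal to its reverse would have palindromic length $1$. Next, $pl(3)=2$ combined with $b_0 \neq b_1$ forces $b_1 = b_2$: the split $b_0 b_1 \mid b_2$ is ruled out because $b_0 b_1$ is not a palindrome, so the only viable two-palindrome split is $b_0 \mid b_1 b_2$. Finally, $pl(4)=1$ says $b_0 b_1 b_2 b_3$ is a palindrome, which together with $b_1 = b_2$ yields $b_0 = b_3$, giving the $xyyx$ form with distinct letters.

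For the inductive step, assume $w_b(4j,4)$ is $xyyx$ with distinct letters for every $j < k$; this ensures Lemma 9 applies to every palindromic factor contained in $w_b(0, 4k)$. Replaying the proofs of Lemma 4 and Lemma 5 inside this prefix produces a canonical optimal decomposition of $w_b(0,4k)$ into palindromes of length divisible by $4$, whose number equals the prescribed value $pl(4k)$. I would then append the four unknown letters $b_{4k},\ldots,b_{4k+3}$ and enforce the prescribed values $pl(4k+1), pl(4k+2), pl(4k+3), pl(4k+4)$, using the identities from Corollary 3 and Proposition 1. A short case analysis on which equalities hold among the four new letters rules out every pattern except $b_{4k} = b_{4k+3}$, $b_{4k+1} = b_{4k+2}$, $b_{4k} \neq b_{4k+1}$, since any other pattern either produces a decomposition undershooting the prescribed $pl$-value at some position $n \in \{4k+1,\ldots,4k+4\}$, or creates a palindromic factor violating Lemma 9 when inserted into the verified prefix.

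The main obstacle is controlling those palindromic factors of an optimal decomposition of $w_b(0,4k+4)$ that straddle position $4k$, since Lemma 9 directly constrains only the portion lying inside $w_b(0,4k)$. My plan is to handle these by adapting the push-to-the-end algorithm from Lemma 4, invoking the inductively known shape $x_{k-1} y_{k-1} y_{k-1} x_{k-1}$ of the last verified block to realign any straddling palindrome so that, after reorganization, every palindrome either lies entirely inside $w_b(0,4k)$ or starts at position $4k$ with length at most four. This reduces the inductive step to a finite case check driven by the prescribed $pl$-values, which eliminates all shapes of $w_b(4k,4)$ other than $xyyx$ with $x \neq y$.
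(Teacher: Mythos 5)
Your skeleton (induction block by block, the structural lemma about sequences whose $4$-blocks are $xyyx$, and the identities of Corollary 3/4 and Proposition 1) is the same as the paper's, and your base case is fine. The gap is in the inductive step, precisely at the point you flag as "the main obstacle". Your claim that a straddling palindrome can always be realigned, by the push-to-the-end algorithm of Lemma 4, so that every palindrome either lies inside $w_b(0,4k)$ or starts at position $4k$ with length at most four, is unjustified: that algorithm only applies to even palindromes that are embeddable in the center of $4$-aligned blocks $w_b(4z,4r)$, which is exactly what you do not know for a factor ending at position $4k+3$, since its tail lives in the block of unknown shape. The paper does not realign such factors at all; it lets the last factor of an optimal decomposition of $w_b(0,4k+4)$ have arbitrary length $n$ and eliminates the possibilities one by one, using Lemma 8 applied to the palindrome's intersection with the verified prefix to kill all $n\geq 8$ except multiples of $4$ (which already force the new block to be the mirror $xyyx$ of an earlier block), and the prescribed $pl$-values to kill $n<4$ and $n=4$ (type $xxxx$), $5$, $6$, $9$.

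The decisive case you cannot dispose of with a "finite case check driven by the prescribed $pl$-values at positions $4k+1,\dots,4k+4$" is $n=7$: here the new block has the shape $xabb$ with the previous block $abba$, and this configuration is consistent with all the palindromic-length constraints up to length $4k+4$. The paper rules it out only by looking one block ahead, i.e., by analyzing the last palindromic factor of an optimal decomposition of $w_b(0,4k+8)$, which requires an eight-way sub-case analysis using the relations $(*)$, $(**)$, the congruence $4s_0-4\equiv 0 \bmod 16$ from Corollary 4, and Proposition 1. Nothing in your plan anticipates this look-ahead, and without it the induction does not close; so as written the proposal misses the central technical difficulty of the lemma.
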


\begin{proof}
We prove the statement by induction:\\

Firstly the statement holds for $s=0$. Supposing that this statement is true for all $s \leq s_0$, we will prove it for $s = s_0+1$.\\

Let us consider a decomposition $w_b(0, 4s_0+4)=p_1p_2...p_r$ such that $r=pl(4s_0+4)$, and let us denote by $n$ the length of $p_r$. \\
Firstly $n$ can not be too small: if $n<4$ then $pl(4s_0+3)=1+pl(4s_0+3-n)>pl(4s_0+3)$ which contradicts Corollary 2. \\
Secondly, if $n$ is odd then it can not be too large: if $n=2n_0+1$ and $n_0 > 4$ then $w_b(4s_0+6-2n_0,2n_0-7)$ is a palindrome of odd size larger or equal to $3$ and finishing at the position $4s_0-1$, which does not exist because of the Lemma 8. \\
Thirdly, if $n$ is even and large enough: if $n=2n_0$ and $n_0 \geq 4$, then, because of Lemma 8, $n$ is a multiple of $4$ and $w_b(4s_0-1,4)$ is the inverse of some words $xyyx$.\\
So there are $5$ other cases to study: $n=4,5,6,7,9$.\\

When $n=4$, $w_b(4s_0,4)$ is either of type $xxxx$ or $xyyx$, and $pl(4s_0-1)+1=pl(4s_0+3)$, if $w_b(4s_0,4)$ is of type $xxxx$ then $w_b(0,4s_0+3)=w_b(0,4s_0)xxxx$ so $pl(4s_0+2)\leq pl(4s_0-1)+1=pl(4s_0+3)$, contradicts Corollary 2.\\

When $n=5$ or $6$, $pl(4s_0+3)=pl(4s_0+3-n)+1 > pl(4s_0-1)+1$, however, $pl(4s_0+3) \leq pl(4s_0-1)+1$, contradiction.\\

When $n=7$, $pl(4s_0+3)=pl(4s_0-4)+1= pl(4s_0-5)+2$. On the other hand, $pl(4s_0-1) \leq pl(4s_0-5)+1$ and $pl(4s_0+3) \leq pl(4s_0-1)+1$, so $$pl(4s_0+3) = pl(4s_0-1)+1= pl(4s_0-5)+2. \; (*)$$ After Corollary 4, $4s_0-4 \equiv 0 \mod 16$ and 
$$pl(4s_0+7)= pl(4s_0+3)\; \text{or}\; pl(4s_0+7)= pl(4s_0+3)-1 \; (**).$$ If we write $w_b(0,4s_0+4)=w_b(0,4s_0-4)abbaxabb$ let us consider the last palindromic factor of $w_b(0,4s_0+8)$:\\
1) The length can not be smaller than $4$, otherwise $pl(4s_0+7)=pl(4s_0+i)+1>pl(4s_0+7)$ with $3 <i <6$, contradicts Corollary 3.\\
2) The length can not be $4,5,6,7$, otherwise $pl(4s_0+7)=pl(4s_0+i)+1$ with $-1 <i \leq3$, but $pl(4s_0+i)\geq pl(4s_0+3)$, so that $pl(4s_0+7)>pl(4s_0+3)+1$, contradicts $(**)$.\\
3) The length can not be $8$, otherwise $w_b(0,4s_0+8)=w_b(0,4s_0-4)abbaxabbbbax$ and $pl(4s_0+7)=pl(4s_0-1)+1=pl(4s_0+3)$. But on the other hand, $pl(4s_0+4)=pl(4s_0+3)+1$ and $pl(4s_0+6)=pl(4s_0+7)+1=pl(4s_0+3)+1$ because of Proposition 1;  $pl(4s_0+5)=pl(4s_0+3)+1$ because of the decomposition $w_b(0,4s_0+6)=w_b(0,4s_0+4)bb$ so that $pl(4s_0+4)=pl(4s_0+5)=pl(4s_0+6)$ which contradicts Corollary 4.\\
4) The length can not be $9,10$, otherwise  $pl(4s_0+7)=pl(4s_0-i)+1$ with $i=2,3$, but $pl(4s_0-i) \geq pl(4s_0-1)+1$ so that $pl(4s_0+7)>pl(4s_0-1)+1=pl(4s_0)+3$, contradicts $(**)$.\\
5) The length can not be $11,12,13,14,16,17$, because the last factor can not be palindromic.\\
6) The length can not be $15$, otherwise, $w_b(0,4s_0+8)=w_b(0,4s_0-8)cddcabbaxabbacdd$, with $a \neq b, c\neq d$. Let us check a decomposition $w_b(0,4s_0+5)=w_b(0,4s_0-4)abbaxabba$, so that $pl(4s_0+4)\leq pl(4s_0-5)+1$, but $pl(4s_0+4)=pl(4s_0+3)+1$ which implies $pl(4s_0+3) \leq pl(4s_0-5)$, contradicts $(*)$.\\
7) The length can not be an odd number larger than $15$, otherwise, there is a palindromic factor of odd size larger than $3$ in $w_a(0,4s_0)$ finishing at position $4s_0-1$, contradicts to Lemma 8.\\
8) The length can not be an even number larger than $14$, otherwise,because of Lemma 8, the length is a multiple of $4$, which implies the factor $w_a(4s_0,4)$ is the symmetry of some words $w_a(4x,4)$, by hypothesis, it is of type $abba$ but not $xabb$.\\
In conclusion, the last palindromic factor of $w_a(0,4s_0+4)$ can not be $7$.\\

When $n=9$, $pl(4s_0+3)=pl(4s_0-6)+1= pl(4s_0-5)+2 \geq pl(4s_0-1)+1$. On the other hand, $pl(4s_0+3) \leq pl(4s_0-1)+1$, so $pl(4s_0+3) = pl(4s_0-1)+1$; another observation is that $pl(4s_0+2) \leq pl(4s_0-5)+1$ because $\overline{b_{4s_0-4},b_{4s_0-3},b_{4s_0-2},b_{4s_0-1},b_{4s_0},b_{4s_0+1},b_{4s_0+2}}$ is palindromic, but $pl(4s_0+2)=pl(4s_0+3) +1$ so $pl(4s_0+3)+2 \leq pl(4s_0-5)+2=  pl(4s_0+3)$, contradiction.\\

In conclusion, for all possible cases $w_b(4s_0,4)$ is of type $xyyx$.

\end{proof}

\begin{proposition}
Let $w$ be a finite word of length $4^k$, such that its palindromic length sequence coincides with a prefix of $(pl(n))_{n \in \mathbf{N}}$, then $w$ is a prefix of a sequence in $\mathcal{C}$.
\end{proposition}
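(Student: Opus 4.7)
The proof will proceed by induction on $k$. For the base case $k=0$, a single letter is trivially a prefix of a sequence in $\mathcal{C}$. For $k=1$, Lemma 9 (whose proof is inductive and therefore applies to finite prefixes of length $4$) shows that $w$ must have the form $xyyx$ with $x\neq y$, which is exactly the level-1 term $w_1 = a f_0(a) f_0(a) a$ of a sequence in $\mathcal{C}$ for the choice $a=x$ and any bijection $f_0$ of $\Sigma$ satisfying $f_0(x)=y$.

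For the inductive step, assume the proposition at length $4^{k-1}$ and let $w$ have length $4^k$ with matching palindromic length. By Lemma 9 applied to finite prefixes, every 4-block $w(4j,4)$ has the form $xyyx$ with $x\neq y$. Fix a bijection $f$ from $\Sigma^4$ to a new alphabet $\Sigma'$ and define the collapsed word $b$ of length $4^{k-1}$ by $b_j = f(w(4j,4))$. The next task is to verify that the palindromic length of $b$ agrees with $(pl(n))$ on $\{1,\dots,4^{k-1}\}$. The argument mirrors the second part of the proof of Lemma 7: Lemmas 3, 4, and 5 only use that 4-blocks are $xyyx$ with $x\neq y$, so they apply to $w$ and produce, for each $j$, an optimal palindromic decomposition of $w(0,4j)$ into palindromes of length a multiple of $4$ beginning at positions multiples of $4$; applying $f$ transports this to an optimal decomposition of $b(0,j)$, giving $pl_b(j)=pl_w(4j)$. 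Combined with the identity $pl(4j)=pl(j)$ (the consequence of Lemma 7 that $pl_a(4m) = pl_b(m)$ for $a,b\in\mathcal{C}$ sharing the same palindromic length sequence), one obtains $pl_b(j)=pl(j)$ for all $j\leq 4^{k-1}$. The induction hypothesis then yields that $b$ is a prefix of some sequence in $\mathcal{C}$ over $\Sigma'$.

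It remains to lift the $\mathcal{C}$-structure of $b$ back to $w$. The nested quarter-block decomposition of $b$ at every level translates via $f^{-1}$ into a corresponding quarter-block decomposition of $w$ at the 4-block scale, so that at each level $i<k$ the macroscopic factorization $w_{i+1}=M_0M_1M_1M_0$ (with $|M_j|=4^i$ and $M_0=M_3$, $M_1=M_2$) is guaranteed. The required bijection $f_i$ of $\Sigma$ with $M_1=f_i(M_0)$ acting letter-wise can then be read off from the aligned 4-blocks: from $M_0$'s $j$-th 4-block $x_jy_jy_jx_j$ and $M_1$'s $j$-th 4-block $x'_jy'_jy'_jx'_j$, set $f_i(x_j)=x'_j$ and $f_i(y_j)=y'_j$. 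The main obstacle will be verifying that this assignment is consistent: whenever two 4-blocks of $M_0$ share a letter at a given position, the corresponding 4-blocks of $M_1$ must share the corresponding letter. I expect this consistency to be forced by the palindromic-length hypothesis, as follows: an inconsistency would create a place where two 4-block positions of $M_0$ carry the same letter but their $M_1$-images differ; examining the palindromic factors of $w$ that straddle the two relevant block positions, and using the precise arithmetic of $(pl(n))$ encoded in Corollary 2, Proposition 1, and Corollary 4, one can produce a palindromic decomposition of some prefix $w(0,n)$ whose length is strictly smaller or larger than $pl(n)$, contradicting the hypothesis. Once consistency is established, $f_i$ extends to a bijection of $\Sigma$, and $f_i(M_0)\neq M_0$ follows from $\beta_1\neq \beta_0$ in the $\mathcal{C}$-structure of $b$, completing the induction.
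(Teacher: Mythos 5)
Your proof is essentially the paper's proof: induction on $k$; Lemma 9 to force every $4$-block of $w$ into the form $xyyx$ with $x\neq y$; Lemmas 3, 4, 5 (which, as you note, only need that block structure) to produce optimal decompositions of each prefix $w(0,4j)$ into palindromes of length divisible by $4$; the collapse $b_j=f(w(4j,4))$ and the transport of optimal decompositions as in Lemma 7 to get $pl_b(j)=pl(j)$; and the induction hypothesis applied to $b$. Where you differ is only in the last step. The paper disposes of it in a single clause (``so $w$ is also a prefix of a sequence in $\mathcal{C}$, by applying the inverse of $f$''), implicitly invoking the closure of $\mathcal{C}$ under the inverse of the block collapse that is asserted, but not proved, in the Remark following Proposition 2. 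You instead unpack what that closure actually requires: the block-level bijections furnished by the $\mathcal{C}$-structure of $b$ must be realized by letter-level bijections of $\Sigma$, which is a genuine well-definedness and injectivity condition on the partial maps read off from aligned $4$-blocks (two blocks of $M_0$ sharing a letter at some position must have images in $M_1$ sharing the corresponding letter, and distinct letters must have distinct images). You correctly identify this as the crux, but you only conjecture that the palindromic-length hypothesis forces consistency; you never exhibit the contradicting decomposition. So, measured against the paper, yours is the same argument with one step expanded rather than compressed, and the expanded step is precisely the one the paper does not justify either. If you are content to argue at the paper's level of rigor, you can replace your consistency discussion by a citation of the claimed closure of $\mathcal{C}$ under $f^{-1}$; if not, the consistency verification is the one piece that still has to be written down, and it is not supplied by the paper's own proof.
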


\begin{proof}

Let us prove it by induction. The statement is trivially true when $k=0$. Now suppose the statement is true for $k=s_0$, let us consider the case $k=s_0+1$:\\

Remarking that Lemma 2, 3, 4, 5 work under the weaker condition of sequences announced as in previous proposition, we can apply the same results to prove each prefix of $w$ of length $4k$ admits an optimal palindromic decomposition of type $p_1p_2..p_r$ such that the length of all this factors are multiples of $4$. Using Lemma 7 there is another alphabet $\Sigma_1$ and a bijection $f: \Sigma^4 \to \Sigma_1$ such that $f(w)$ is still a word which palindromic length sequence coincides with a prefix of $(pl(n))_{n \in \mathbf{N}}$, however the length of $f(w)$ is $4^{s_0}$, using the hypothesis of induction, it is a prefix of a sequence in $\mathcal{C}$, so $w$ is also a prefix of a sequence in $\mathcal{C}$, by applying the inverse of $f$.

\end{proof}

\begin{theorem}
All sequences such that their palindromic length sequences coincide with the one of Thue-Morse's are in $\mathcal{C}$.
\end{theorem}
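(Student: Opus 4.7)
The plan is to promote the finite result (Proposition 2) to the infinite setting by a compatibility argument over the nested prefixes of length $4^k$. Given any sequence $(b_n)_{n\in\mathbf{N}}$ whose palindromic length sequence equals $(pl(n))_{n\in\mathbf{N}}$, the first observation is that for every $k\geq 0$ the prefix $u_k := w_b(0,4^k)$ has palindromic length sequence coinciding with a prefix of $(pl(n))_{n\in\mathbf{N}}$, so Proposition 2 applies and yields an initial letter $a^{(k)}$ together with bijections $f_0^{(k)},\dots,f_{k-1}^{(k)}$ (each satisfying the non-fixedness condition) whose $\mathcal{C}$-recursion produces exactly the iterate $w_k=u_k$.

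The next step is to argue that these level-by-level realizations can be chosen compatibly across $k$. Since each $\mathcal{C}$-iterate $w_k$ starts with $w_0$, the starting letter is forced: $a^{(k)}=b_0$ for every $k$. Given this, the identity $w_{j+1}=w_j f_j(w_j) f_j(w_j) w_j$ applied inside $u_k$ translates to the pointwise equality $w_b(4^j,4^j)=f_j^{(k)}(u_j)$, which uniquely determines $f_j^{(k)}$ on the set of letters occurring in $u_j$. Crucially, this determination depends only on $(b_n)$ and not on the ambient $k$. I would then extract this common restriction, extend it arbitrarily to a bijection $f_j$ of the full alphabet, and observe that the non-fixedness condition $f_j(w_j)\neq w_j$ is inherited from any $f_j^{(k)}$ with $k>j$.

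Finally, with $a:=b_0$ and the sequence $(f_n)_{n\in\mathbf{N}}$ built above, the $\mathcal{C}$-recursion produces iterates that agree with $u_k$ for every $k$, so since $|u_k|=4^k\to\infty$ the limit sequence of this construction equals $(b_n)$. Hence $(b_n)\in\mathcal{C}$, establishing the theorem.

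The step I expect to demand the most care is the compatibility claim: verifying that the data implicitly extracted from Proposition 2 at successive levels agree on their overlap, so that a family of local realizations glues into a single global one. What makes this go through is the pointwise uniqueness of $f_j$ on the letters of $u_j$, which is precisely the rigidity ensured by the hypothesis $f_n(w_n)\neq w_n$ underlined in Remark 1; without it, one could have distinct bijections producing the same prefix and the gluing would fail.
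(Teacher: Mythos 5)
Your argument is correct and takes the approach the paper clearly intends: the paper states the theorem with no proof at all, treating it as an immediate consequence of Proposition 2, and what you supply is precisely the missing limit step. Your compatibility argument --- that $a$ is forced to be $b_0$, that each $f_j$ is pointwise determined on the letters of $u_j$ by $f_j(u_j)=w_b(4^j,4^j)$ independently of the ambient level $k$, and that the non-fixedness condition is inherited --- is the right way to glue the finite realizations from Proposition 2 into a single element of $\mathcal{C}$.
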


\bibliographystyle{alpha}
\bibliography{citations}

\end{document}